\documentclass[10pt]{article}
\usepackage[utf8]{inputenc}
\usepackage[english]{babel}
\usepackage{amsmath}
\usepackage{amssymb}
\usepackage{amsthm}
\usepackage{algorithm,algorithmicx,algpseudocode}
\usepackage{verbatim}
\usepackage{graphicx,color}
\usepackage{graphics}
\usepackage{subfigure}
\usepackage{mathrsfs}
\usepackage{epstopdf}

\numberwithin{equation}{section}
\theoremstyle{plain}
\newtheorem{theorem}{Theorem}
\newtheorem{lemma}[theorem]{Lemma}

\newtheorem{corollary}[theorem]{Corollary} 
\theoremstyle{definition}
\newtheorem{definition}[theorem]{Definition}

\theoremstyle{remark}
\newtheorem{rem}[theorem]{Remark}


\newcommand{\T}{\mathcal{T}}
\newcommand{\E}{\mathcal{E}}

\newcommand{\V}{\mathcal{V}}

\newcommand{\fgref}[1]{Figure~\ref{#1}}

\newcommand{\lemref}[1]{Lemma~\ref{#1}}
\newcommand{\corref}[1]{Corollary~\ref{#1}}
\newcommand{\thmref}[1]{Theorem~\ref{#1}}
\newcommand{\Enormh}[1]{||| #1 |||_h}
\newcommand{\Enormhw}[2]{||| #1 |||_{h,#2}}
\newcommand{\EnormH}[1]{||| #1 |||_H}
\newcommand{\EnormHw}[2]{||| #1 |||_{H,#2}}

\newcommand{\e}{\mathscr{E}}
\newcommand{\Bh}[2]{a_h(#1,#2)}
\newcommand{\mscale}{A}
\newcommand{\amin}{\alpha}
\newcommand{\amax}{\beta}
\newcommand{\VH}{\mathcal{V}_H}

\newcommand{\VmshH}{\mathcal{V}^{ms}_{H}}
\newcommand{\VmshHL}{\mathcal{V}^{ms,L}_{H}}
\newcommand{\WmshHL}{\mathcal{W}^{ms,L}_{H}}
\newcommand{\Vh}{\mathcal{V}_h}
\newcommand{\Vfh}{\mathcal{V}^{\operatorname*{f}}} 
\newcommand{\Vfhw}[1]{\mathcal{V}^f_h(#1)}

\newcommand{\Ich}{\mathcal{I}^{\operatorname*{c}}_h}
\newcommand{\IcH}{\mathcal{I}^{\operatorname*{c}}_H}
\newcommand{\Lp}{\Pi_H}

\newcommand{\dx}{\operatorname*{d}\hspace{-0.3ex}x}

\newcommand{\F}{\mathfrak{F}}
\newcommand{\umshH}{u^{ms}_{H}}

\newcommand{\umshHT}{u^{ms}_{H,T}}
\newcommand{\umshHL}{u^{ms,L}_{H}}

\newcommand{\wmshHL}{w^{ms,L}_{H}}

\newcommand{\ba}{C_{\amax/\amin}}

\newcommand{\mc}{\mathcal}
\newcommand{\diam}{\operatorname*{diam}}
\newcommand{\dime}[1]{\operatorname*{dim}(#1)}
\newcommand{\supp}{\text{supp}}

\begin{document} 
\title{Convergence of a \\ discontinuous Galerkin multiscale method}

\author{Daniel Elfverson\footnotemark[2]\ \footnotemark[5] \and Emmanuil H. Georgoulis\footnotemark[3] \and Axel Målqvist\footnotemark[2]\ \footnotemark[5] \and Daniel Peterseim\footnotemark[4]\ \footnotemark[6]}

\renewcommand{\thefootnote}{\fnsymbol{footnote}}
\footnotetext[2]{Information Technology, Uppsala University, Box 337, SE-751 05, Uppsala, Sweden.}
\footnotetext[3]{Department of Mathematics, University of Leicester, Leicester, UK.}
\footnotetext[4]{Institut für Mathematik, Humboldt-Universität zu Berlin, Berlin, Germany.}
\footnotetext[5]{Supported by The Swedish Research Council and The Göran Gustafsson Foundation.}
\footnotetext[6]{Supported by the DFG Research Center Matheon Berlin through project C33.}
\renewcommand{\thefootnote}{\arabic{footnote}}
\pagestyle{myheadings}
\thispagestyle{plain}





\maketitle
\begin{abstract} A convergence result for a discontinuous Galerkin multiscale method for a second order elliptic problem  is presented. We consider a heterogeneous and highly varying diffusion coefficient in $L^\infty(\Omega,\mathbb{R}^{d\times d}_{sym})$ with uniform spectral bounds and without any assumption on scale separation or periodicity. The multiscale method uses a corrected basis that is computed on patches/subdomains. The error, due to truncation of corrected basis, decreases exponentially with the size of the patches. Hence, to achieve an algebraic convergence rate of the multiscale solution on a uniform mesh with mesh size $H$ to a reference solution, it is sufficient to choose the patch sizes as $\mathcal{O}(H|\log(H^{-1})|)$. We also discuss a way to further localize the corrected basis to element-wise support leading to a slight increase of the dimension of the space. Improved convergence rate can be achieved depending on the piecewise regularity of the forcing function. Linear convergence in energy norm and quadratic convergence in $L^2$-norm is obtained independently of the forcing function. A series of numerical experiments confirms the theoretical rates of convergence. 
\end{abstract}

\def\ams{\vspace{.5em}
{\textbf{AMS subject classiﬁcations}:\,\relax%
}}
\def\endams{\par}

\def\keywords{\vspace{.5em}
{\textbf{Keywords}:\,\relax%
}}
\def\endkeywords{\par}

\def\AMS{\vspace{.5em}
{\textbf{AMS subject classifications}:\,\relax%
}}
\def\endAMS{\par}

\begin{keywords}
  multiscale method, discontinuous Galerkin, a priori error estimate, convergence  
\end{keywords}
\begin{AMS}
   65N12, 65N30
\end{AMS}

 \section{Introduction}
This work considers the numerical solution of second order elliptic problems with heterogeneous and highly varying (non-periodic) diffusion coefficient. The heterogeneities and oscillations of the coefficient may appear on several non-separated scales. 
More specifically, let $\Omega\subset\mathbb{R}^{d}$ be a bounded Lipschitz domain with polygonal boundary $\Gamma$. The boundary $\Gamma$ may be partitioned into some non-empty closed subset $\Gamma_D$ (the Dirichlet boundary) and its complement $\Gamma_N:=\Gamma\setminus\Gamma_D$ (the, possibly empty, Neumann boundary).
We assume that the diffusion matrix $A\in L^\infty\left(\Omega,\mathbb{R}_{\mathrm{sym}}^{d\times d}\right)$ has uniform spectral bounds $0<\alpha,\beta<\infty$, defined by
\begin{equation}\label{eq:ba}
0<\alpha:=\underset{x\in\Omega}{\operatorname{ess}\inf}\hspace{-1ex}
\inf\limits_{v\in\mathbb{R}^{d}\setminus\{0\}}\hspace{-2ex}\dfrac{(A( x)v)\cdot v}{v\cdot v}\leq\underset{x\in\Omega}{\operatorname{ess}\sup}\hspace{-1ex}
\sup\limits_{v\in\mathbb{R}^{d}\setminus\{0\}}\hspace{-2ex}\dfrac{(A( x)v)\cdot v
}{v\cdot v}=:\beta<\infty.
\end{equation}
Given $f\in L^{2}( \Omega) $, we seek the weak solution of the boundary-value problem
\begin{equation}\notag
  \begin{aligned}
    -\nabla \cdot\mscale \nabla u &= f\quad \text{in }\Omega, \\
    u & = 0 \quad \text{on }\Gamma_D, \\
    \nu\cdot\mscale\nabla u &= 0\quad \text{on }\Gamma_N,
  \end{aligned}
\end{equation}
i.e., we seek $u\in H^1_D(\Omega):=\{v\in H^1(\Omega)\;\vert\;v\vert_{\Gamma_D}=0\text{ in the sense of traces}\}$, such that
\begin{equation}\label{e:modelvar}
a\left(  u,v\right):=\int_{\Omega}A\nabla u\cdot \nabla
v\dx =\int_{\Omega}fv\dx=:F( v) \quad\text{for all } v\in H^1_D(\Omega).
\end{equation}

Many methods have been developed in recent years to overcome the lack of performance of classical finite element methods in cases where $A$ is rough, meaning that $A$ has discontinuities and/or high variation; we refer to \cite{MR701094,MR1286212,MR1660141,MR1455261,resbub} amongst others. Common to all the aforementioned approaches is the idea to solve the problems on small subdomains and to use the results to construct a better basis for some Galerkin method or to modify the coarse scale operator. However, apart from the one-dimensional setting, the performance of those methods correlates strongly with periodicity and scale separation of the diffusion coefficient. 

Other approaches \cite{MR2721592,OwhadiZ11,BabLip11} perform well without any assumptions of periodicity or scale separation in the diffusion coefficient at the price of a high computational cost: in \cite{MR2721592,OwhadiZ11} the support of the modified basis functions is large and in \cite{BabLip11} the computation of the basis functions involves the solutions of local eigenvalue problems. 

Only recently in \cite{MP11}, a variational multiscale method has been developed that allows for textbook convergence with respect to the mesh size $H$,
$\left\Vert u-u_{H}\right\Vert _{H^{1}(\Omega) }\leq C_{f,\amax/\amin}H$ with a constant $C_{f,\amax/\amin}$ that depends on $f$ and the global bounds of the diffusion coefficient but not its variations. This result is achieved by an operator-dependent modification of the classical nodal basis based on the solution of local problems on vertex patches of diameter $\mc{O}(H|\log(H^{-1})|)$. The method  in \cite{MP11} is an extension of the method presented in \cite{Larson20072313,malqvist2010}.

In this work, we present a discontinuous Galerkin (dG) multiscale method with similar performance. The method is a slight variation of the method \cite{EGM12}, in the sense that the boundary conditions for the local problems are now of Dirichlet type. The dG finite element method admits good conservation properties of the state variable, and also offers the use of very general meshes due to the lack of inter-element continuity requirements, e.g., meshes that contain several different types of elements and/or hanging nodes. Both those features are crucial in many applications. 
In the context of multiscale methods the discontinuous formulation allows for more flexibility in the construction of the basis function e.g., allowing more general boundary conditions \cite{EGM12}.
%
%
Although the error analysis presented in this work is restricted to regular simplicial or quadrilateral/hexahedral meshes, we stress that all the results appear to be extendable for the case of irregular meshes (i.e., meshes containing hanging nodes). We refrained from presenting these extensions here for simplicity of the current presentation. Under these assumptions, we provide a complete a priori error analysis of this method including errors caused by the approximation of basis functions. 

In this dG multiscale method and in previous related methods \cite{MP11,EGM12}, the accuracy is ensured by enlarging the support of basis functions appropriately. Hence, supports of basis functions overlap and the communication is no longer restricted to neighboring elements but is present also between elements at a certain distance. We will prove that resulting overhead is acceptable in the sense that it scales only logarithmically in the mesh size. 

In order to retain the dG-typical structure of the stiffness matrix, we discuss the possibility of localizing the multiscale basis functions to single elements. Instead of having $\mc{O}(1)$ basis functions per element with $\mc{O}(H|\log(H^{-1})|)$ support, we would then have $\mathcal{O}(|\log(H^{-1})|)$ basis functions per element with element support. The element-wise application of a singular value decomposition easily prevents ill-conditioning of the element stiffness matrices, while simultaneously achieving further compression of the multiscale basis. 

The outline of the paper is as follows. In Section~\ref{s:discretesetting}, we recall the dG finite element method. Section~\ref{s:multiscale} defines our multiscale method, which is then analyzed in Section~\ref{s:apriori}. Section~\ref{s:numexp} presents numerical experiments confirming the theoretical developments. 

Throughout this paper, standard notation on Lebesgue and Sobolev spaces is employed. Let $0\leq C<\infty$ be any generic constant that neither depends on the mesh size nor the diffusion matrix $A$; $a\lesssim b$ abbreviates an inequality $a\leq C\,b$ and $a\approx b$ abbreviates $a\lesssim b\lesssim a$. Also, let the constant $\ba$ depend on the minimum and maximum bound bound ($\amin$ and $\amax$) of the diffusion matrix $\mscale$ \eqref{eq:ba}.

\section{Fine scale discretization}\label{s:discretesetting}

\subsection{Finite element meshes and spaces}

Let $\T$ denote a subdivision of $\Omega$ into (closed) regular simplices or into quadrilaterals (for $d=2$) or hexahedra (for $d=3$), i.e., $\bar{\Omega}=\cup_{T\in\T} T$. We assume that $\T$ is regular in the sense that any two elements are either disjoint or share exactly one face or share exactly one edge or share exactly one vertex. 

Let $\E$ denote the set of edges (or faces for $d=3$) of $\T$; $\E(\Omega)$ denotes the set of interior edges, $\E(\Gamma)$, $\E(\Gamma_D)$ and $\E(\Gamma_N)$) refer to the set of edges on the boundary of $\Omega$, on the Dirichlet and on the Neumann boundary, respectively.
Let $\hat{T}$, denote the reference simplex or (hyper)cube and let $\mathcal{P}_p(\hat{T})$ and $\mathcal{Q}_p(\hat{T})$ denote the spaces of polynomials of degree less than or equal to $p$ in all or on each variable, respectively. Then, we define the set of piecewise polynomials
\begin{equation*}
P_p(\T) := \{v:\Omega\rightarrow \mathbb{R}\;\vert \;\forall T\in\T,v\vert_T\circ F_{T}\in\mathcal{R}_p(\hat{T})\},
\end{equation*}
with $\mathcal{R}_p\in\{\mathcal{P}_p,\mathcal{Q}_p\}$, where $F_T:\hat{T}\to T$, $T\in\T$ is a family of element maps. Let also $\Pi_p(\T):L^2(\Omega)\rightarrow P_p(\T)$ denote the $L^2$-projection onto $\T$-piecewise polynomial functions of order $p$. In particular, we have ${(\Pi_0(\T) f)\vert_T=|T|^{-1}\int_T f\dx}$, $T\in\T$ for all $f\in L^2(\Omega)$. 
Note that $v\in P_p(\T)$ does not necessarily belong to $H^1(\Omega)$. The $\T$-piecewise gradient $\nabla_\T v$, with $(\nabla_\T v)\vert_T=\nabla (v\vert_T)$ for all $T\in\T$, is well-defined and $\nabla_\T v\in (P_{p-1}(\T))^d$.

For any interior edge/face $e\in\E(\Omega)$ there are two adjacent elements $T^-$ and $T^+$ with $e=\partial T^-\cap\partial T^+$. We define $\nu$ to be the normal vector of $e$ that points from $T^-$ to $T^+$. For boundary edges/faces $e\in\mathcal{E}(\Gamma)$ let $\nu$ be the outward unit normal vector of $\Omega$.

Define the jump of $v\in P_k(\T)$ across $e\in\E(\Omega)$ by $[v]:=v\vert_{T^-}-v\vert_{T^+}$ and define $[v]:=v\vert_{e}$ for $e\in\E(\Gamma)$. The average of $v\in P_p(\T)$ across $e\in\E(\Omega)$ is defined by $\{ v\}:=(v\vert_{T^-}+v\vert_{T^+})/2$ and for boundary edges $e\in\E(\Gamma)$ by $\{ v\}:=v\vert_e$.

In the remaining part of this work, we consider two different meshes: a coarse mesh $\T_H$ and a fine mesh $\T_h$, with respective definitions for the edges/faces $\E_H$ and $\E_h$. We denote the $\T_H$-piecewise gradient by $\nabla_Hv:=\nabla_{\T_H}v$ and, respectively, $\nabla_hv:=\nabla_{\T_h}v$ for the $\T_h$-piecewise gradient . We assume that the fine mesh $\T_h$ is the result of one or more refinements of the coarse mesh $\T_H$. The subscripts $h,H$ refer to the corresponding mesh sizes; in particular, we have $H\in P_0(\T_H)$ with $H\vert_T=\diam(T)=:H_T$ for all $T\in\T_H$, $H_e=\diam{e}$, for all $e\in\E_H$, $h\in P_0(\T_h)$, with $h\vert_T=\diam(T)=:h_T$ for all $T\in\T_h$, and $h_e=\diam{e}$ for all $e\in\E_h$. Obviously, $h\leq H$.

\subsection{Discretization by the symmetric interior penalty method}
We consider the symmetric interior penalty method (SIP) discontinuous Galerkin method \cite{MR0440955,MR664882,MR2290408}. We seek an approximation in the space $\Vh:=P_1(\T_h)$. 
 Given some positive penalty parameter $\sigma>0$, we define the symmetric bilinear form $a_h:\Vh\times \Vh\rightarrow\mathbb{R}$ by
\begin{equation}\label{e:bilindg}
  \begin{aligned}
  a_h(u,v) := &(A\nabla_h u,\nabla_h v)_{L^2(\Omega)} - \sum_{e\in\E_h(\Omega)\cup\E_h(\Gamma_D)}\Big((\{\nu\cdot\mscale\nabla u\},[v])_{L^2(e)} \\
  &  +(\{\nu\cdot\mscale\nabla v\},[u])_{L^2(e)}-\frac{\sigma}{h_e}([u],[v])_{L^2(e)}\Big).
  \end{aligned}
\end{equation}
The jump-seminorm associated with the space $\Vh$, is defined by 
\begin{equation}\label{e:dgjumpnorm}
|\bullet|_{h}^2:=\sum_{e\in\E_h(\Omega)\cup\E_h(\Gamma_D)} \frac{\sigma}{h_e}\|[\bullet]\|_{L^2(e)}^2,
\end{equation}
while the energy norm in $\Vh$ is then given by
\begin{equation}\label{e:dgnorm}
\Enormh{\bullet}:=(\|A^{1/2}\nabla_h\bullet\|_{L^2(\Omega)}^2+|\bullet|_{h}^2)^{1/2}.
\end{equation}
If the penalty parameter is chosen sufficiently large, the dG bilinear form \eqref{e:bilindg} is coercive and bounded with respect to the energy norm \eqref{e:dgnorm}. Hence, there exists a (unique) dG approximation $u_h\in\Vh$, satisfying
\begin{equation}\label{e:DGweak}
a_h(u_h,v) = F(v) \quad\text{ for all }v\in \Vh.
\end{equation}

We assume that  \eqref{e:DGweak} is computationally intractable for practical problems, so we shall never seek to solve for $u_h$ directly. Instead, $u_h$ will serve as a reference solution to compare our coarse grid multiscale dG approximation with. The underlying assumption is that the mesh $\T_h$ is chosen sufficiently fine so that $u_h$ is sufficiently accurate. The aim of this work is to devise and analyse a multiscale dG discretization with coarse scale $H$, in such a way that the accuracy of $u_h$ is preserved up to an $\mc{O}(H)$ perturbation independent of the variation of the coefficient $A$.

\section{Discontinuous Galerkin multiscale method}\label{s:multiscale}

As mentioned above, the choice of the reference mesh $\T_h$ is not directly related to the desired accuracy, but is instead strongly affected by the roughness and variation of the coefficient $A$. The corresponding coarse mesh $\T_H$, with mesh width function $H\geq h$, is assume to be completely independent of $A$. To encapsulate the fine scale information in the coarse mesh, we shall design coarse generalized finite element spaces based on $\T_H$. 

\subsection{Multiscale decompositions}
We introduce a scale splitting for the space $\Vh$. To this end, let $\Lp:=\Pi_1(\T_H)$ and define $\VH:=\Lp\Vh=P_1(\T_H)$ and
\begin{equation*}
\Vfh:=(1-\Lp)\Vh=\{v\in\Vh\;\vert\;\Lp v= 0\}.
\end{equation*}
\begin{lemma}[$L^2$-orthogonal multiscale decomposition]\label{l:odl2}
The decomposition
$$\Vh=\VH\oplus \Vfh,$$
is orthogonal in $L^2(\Omega)$.
\end{lemma}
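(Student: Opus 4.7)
The plan is to observe that, since $\T_h$ is a refinement of $\T_H$, every $\T_H$-piecewise linear function is also $\T_h$-piecewise linear, hence $\VH\subset\Vh$. This is what makes $\Lp=\Pi_1(\T_H)$ meaningful as a map $\Vh\to\VH$, and it is the only geometric fact needed. Everything else follows from abstract properties of an $L^2$-projection.

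First I would produce the decomposition constructively: for any $v\in\Vh$ write $v=\Lp v+(v-\Lp v)$. The first summand lies in $\VH$ by definition of $\Lp$. For the second, use idempotency $\Lp^2=\Lp$ (the defining feature of a projection) to get $\Lp(v-\Lp v)=\Lp v-\Lp v=0$, so $v-\Lp v\in\Vfh$. This shows $\Vh=\VH+\Vfh$.

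Next I would verify that the sum is direct and $L^2$-orthogonal simultaneously, which is cleaner than treating the two properties separately. For any $v_H\in\VH$ and $v_f\in\Vfh$, the optimality characterization of the $L^2$-projection gives $(w-\Lp w,z)_{L^2(\Omega)}=0$ for every $w\in L^2(\Omega)$ and $z\in\VH$. Applied to $w=v_f$ and $z=v_H$, together with $\Lp v_f=0$, this yields
\begin{equation*}
(v_f,v_H)_{L^2(\Omega)}=(\Lp v_f,v_H)_{L^2(\Omega)}=0,
\end{equation*}
which is the orthogonality statement. The triviality of the intersection $\VH\cap\Vfh=\{0\}$ then follows either from the orthogonality (an element orthogonal to itself in $L^2$ must vanish) or directly: if $v\in\VH\cap\Vfh$, then $\Lp v=v$ because $v\in\VH$ while $\Lp v=0$ because $v\in\Vfh$, forcing $v=0$.

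There is no real obstacle here; the only thing worth pausing on is the inclusion $\VH\subset\Vh$, which relies on the assumption that $\T_h$ is obtained by refining $\T_H$ (stated at the end of Section~2.1). If the meshes were unrelated, the $L^2$-projection onto $P_1(\T_H)$ would not in general map into $\Vh$, and the decomposition statement as written would not even typecheck. Apart from this, the entire argument is a standard orthogonal/kernel splitting induced by an idempotent self-adjoint operator on a Hilbert space, specialised to the finite-dimensional subspace $\Vh$ equipped with the $L^2$ inner product.
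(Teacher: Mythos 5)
Your argument is correct and follows essentially the same route as the paper: decompose $v=\Lp v+(1-\Lp)v$ and invoke the defining orthogonality of the $L^2$-projection, which the paper states tersely via the Pythagorean identity $\|v\|_{L^2(\Omega)}^2=\|\Lp v\|_{L^2(\Omega)}^2+\|(1-\Lp)v\|_{L^2(\Omega)}^2$. Your explicit remark that $\VH\subset\Vh$ (because $\T_h$ refines $\T_H$) is a worthwhile detail the paper leaves implicit, but it does not change the substance of the proof.
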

\begin{proof}
The proof is immediate, as any $v\in\Vh$ can be
decomposed uniquely into a coarse finite element function $v_H:=\Lp v\in \VH$ and a (possibly highly oscillatory) remainder $v^{\operatorname*{f}}:=(1-\Lp)v\in \Vfh$, with $\|v\|_{L^2(\Omega)}^2=\|v_H\|_{L^2(\Omega)}^2+\|v^{\operatorname*{f}}\|_{L^2(\Omega)}^2$.
\end{proof}
We now orthogonalize the above splitting with respect to the dG scalar product $a_h$; we keep 
the space of fine scale oscillations $\Vfh$ and simply replace $\VH$ with the orthogonal complement of $\Vfh$ in $\Vh$. We define the fine scale projection $\F:\Vh\rightarrow \Vfh$ by 
\begin{equation}\label{eq:multiscale-map}
  a_h(\F v,w) = a_h(v,w)\quad \text{for all }w\in\Vfh.
\end{equation}
Using the fine scale projection, we can define the coarse scale approximation space by
\begin{equation*}
 \VmshH:= (1-\F)\VH.
\end{equation*}

\begin{lemma}[$a_h$-orthogonal multiscale decomposition]\label{l:odh1}
The decomposition
$$ \Vh=\VmshH\oplus \Vfh, $$
is orthogonal with respect to $a_h$, i.e., any function $v$ in $\Vh$ can be
decomposed uniquely into some function $v^{ms}_H\in \VmshH$ plus $v^{\operatorname*{f}}\in \Vfh$ with $\Enormh{v}^2\approx\Enormh{v^{ms}_H}^2+\Enormh{v^{\operatorname*{f}}}^2$.
The functions $v^{ms}_H\in \VmshH$ and $v^{\operatorname*{f}}\in \Vfh$ are the Galerkin projections of $v\in \V_h$ onto the subspaces $\VmshH$ and $\Vfh$, i.e.,
\begin{align*}
 a_h(v^{ms}_H,w)&=a_h(v,w)=F(w)\quad \text{for all }w\in\VmshH,\\
 a_h(v^{\operatorname*{f}},w)&=a_h(v,w)=F(w)\quad \text{for all }w\in\Vfh.
\end{align*}
\end{lemma}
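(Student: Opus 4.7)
The plan is to exploit the fact that the fine-scale map $\F$ defined in \eqref{eq:multiscale-map} is, by construction, the $a_h$-orthogonal projection from $\Vh$ onto $\Vfh$. Once this is recognised, the space $\VmshH=(1-\F)\VH$ is automatically $a_h$-orthogonal to $\Vfh$: for $u=(1-\F)v_H\in\VmshH$ with $v_H\in\VH$ and any $w\in\Vfh$, the definition of $\F$ gives $a_h(u,w)=a_h(v_H,w)-a_h(\F v_H,w)=0$. This single observation is the engine of the whole lemma.

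For the decomposition itself I would start from the $L^2$-splitting supplied by Lemma~\ref{l:odl2}, writing $v=\Lp v+(1-\Lp)v$ with $\Lp v\in\VH$ and $(1-\Lp)v\in\Vfh$, and then rewrite
\begin{equation*}
v=(1-\F)\Lp v \;+\; \bigl(\F\Lp v+(1-\Lp)v\bigr),
\end{equation*}
which places the first summand in $\VmshH$ and, since $\F\Lp v\in\Vfh$, the second in $\Vfh$. Uniqueness reduces to $\VmshH\cap\Vfh=\{0\}$: any $w$ in this intersection satisfies $a_h(w,w)=0$ by the orthogonality just shown, so $w=0$ follows from coercivity of $a_h$ on $\Vh$ (which holds for sufficiently large penalty parameter $\sigma$ with constants that only depend on $\ba$).

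For the energy equivalence I would test the identity $v=v^{ms}_H+v^{\operatorname*{f}}$ with the orthogonality, which yields $a_h(v,v)=a_h(v^{ms}_H,v^{ms}_H)+a_h(v^{\operatorname*{f}},v^{\operatorname*{f}})$, and then invoke the standard SIP norm equivalence $a_h(w,w)\approx\Enormh{w}^2$ on each of the three arguments $v, v^{ms}_H, v^{\operatorname*{f}}$. The Galerkin interpretation is then immediate: the symmetry of $a_h$ combined with the orthogonality gives $a_h(v^{\operatorname*{f}},w)=0$ for $w\in\VmshH$ and $a_h(v^{ms}_H,w)=0$ for $w\in\Vfh$, so testing the variational identity $a_h(v,\cdot)=F(\cdot)$ against elements of $\VmshH$ respectively $\Vfh$ produces the two displayed equations in the statement.

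The step most likely to require care is the energy equivalence, because one wants the hidden constants in $\Enormh{v}^2\approx\Enormh{v^{ms}_H}^2+\Enormh{v^{\operatorname*{f}}}^2$ to depend only on $\ba$ and $\sigma$, never on the local variation of $A$; this is precisely where the choice of the SIP bilinear form and of the penalty parameter enters and must be made explicit. Everything else is a direct algebraic manipulation driven by the orthogonal-projection nature of $\F$.
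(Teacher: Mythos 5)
Your argument is correct, and it is essentially the intended one: the paper states Lemma~\ref{l:odh1} without proof, and your reconstruction supplies exactly the missing steps — $\F$ is the $a_h$-orthogonal projection onto $\Vfh$, so $(1-\F)\VH\perp_{a_h}\Vfh$; existence of the splitting follows by rewriting the $L^2$-decomposition of Lemma~\ref{l:odl2} as $v=(1-\F)\Lp v+(\F\Lp v+(1-\Lp)v)$; and uniqueness together with $\Enormh{v}^2\approx\Enormh{v^{ms}_H}^2+\Enormh{v^{\operatorname*{f}}}^2$ follow from coercivity and boundedness of $a_h$ with respect to $\Enormh{\cdot}$. The only point worth flagging is that the equalities $a_h(v^{ms}_H,w)=a_h(v,w)=F(w)$ hold only when $v=u_h$ solves \eqref{e:DGweak}; your phrase about testing the variational identity makes this assumption implicitly, just as the lemma statement does.
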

The unique Galerkin approximation $\umshH\in \VmshH$ of $u\in \V$ solves
\begin{equation}\label{eq:GMM}
  \Bh{\umshH}{v} = F(v)\quad \text{for all }v\in\VmshH.
\end{equation}
We shall see in the error analysis (cf. Theorem~\ref{thm:umshH}) that the orthogonality yields error estimates for the Galerkin approximation $\umshH\in \VmshH$ of \eqref{eq:GMM} that are independent of the regularity of the solution and of the diffusion coefficient $A$. 
However, the space $\VmshH$ is not suitable for practical computations as a local basis for this space is not easily available. 
Indeed, given a basis of $\VH$, e.g., the element-wise Lagrange basis functions $\{\lambda_{T,j}\;|\;T\in\T_H,\;j=1,\ldots,r\}$ where $r=(1+d)$ for regular simplices or $r=2^d$ for quadrilaterals/hexahedra.
 The space $\VmshH$ may be spanned by the
corrected basis functions $(1-\F)\lambda_{T,j}$, $T\in\T_H,\;j=1,\ldots,r$. 
Although $\lambda_{T,j}$ has local support $\supp{\lambda_{T,j}}=T$, its corrected version $(1-\F)\lambda_{T,j}$ may have global support in $\Omega$, as \eqref{eq:multiscale-map} is a variational problem on the whole domain $\Omega$. Fortunately, as we shall prove later, the corrector functions $\phi_{T,j}$ decay quickly away from $T$ (cf. previous numerical results in \cite{EGM12} and a similar observation for the corresponding conforming version of the method \cite{MP11}). This decay motivates the local approximation of the corrector functions, at the expense of introducing small perturbations in the method's accuracy. 

\subsection{Discontinuous Galerkin multiscale method}
The localized approximations of the corrector functions are supported on element patches in the coarse mesh $\T_H$.
\begin{definition} For all $T\in\T_H $, define element patches with size $L$ as
  \begin{equation}\notag
    \begin{aligned}
      \omega^1_T & := \text{int}(T), \\
      \omega^{L}_T & := \text{int}(\cup\{T'\in \T_H\;\vert\; T'\cap\bar{\omega}_T^{L-1}\neq\emptyset\}),\quad k=1,2,\ldots .
    \end{aligned}
  \end{equation}
  We refer to \fgref{fig:mesh} for an illustration.
\end{definition}
\begin{figure}\label{fig:mesh}
  \centering
  \includegraphics[scale=0.7]{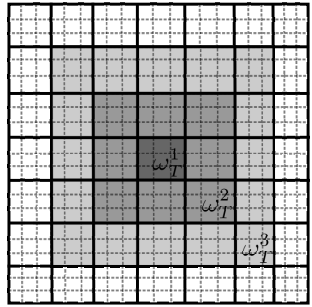}
  \caption{Example of a one layer patch $\omega^1_T$, two layer patch $\omega^2_T$, and a three layer patch $\omega^3_T$, on a quadrilateral mesh.}
\end{figure}
We introduce a new discretization parameter $L>0\in\mathbb{N}$ and define localized corrector functions $\phi^L_{T,j}\in\Vfh(\omega^L_T):=\{v\in\V^{\operatorname*{f}} \mid v|_{\Omega\setminus\omega^L_T}=0\}$ by
\begin{equation}\label{eq:corrector}
  a_h(\phi^L_{T,j},w) = a_h(\lambda_{T,j},w)\quad \text{for all }w\in\Vfh(\omega^L_T).
\end{equation}
Further, we define the multiscale approximation space
\begin{equation*}
 \VmshHL=\text{span}\{\lambda_{T,j}-\phi^L_{T,j}\;\vert\; T\in\T_H,\;j=1,\ldots,r\},
\end{equation*}

The dG multiscale method seeks $\umshHL\in\VmshHL$ such that
\begin{equation}\label{eq:DGMM}
  \Bh{\umshHL}{v} = F(v)\quad \text{for all }v\in\VmshHL.
\end{equation}
Since $\VmshHL\subset \Vh$, this method is a Galerkin method in the Hilbert space $\Vh$ (with scalar product $a_h$) and, hence, inherits well-posedness from the reference discretization \eqref{e:DGweak}.

Moreover, the proposed basis $\{\lambda_{T,j}-\phi^L_{T,j}\;\vert\; T\in\T_H,\;j=1,\ldots,r\}$ is stable with respect to the fine scale parameter $h$, as we shall see in Lemma~\ref{l:stabbasis} below.

\subsection{Compressed discontinuous Galerkin multiscale method}\label{ss:compressed}
The basis functions in the above multiscale method have enlarged supports (element patches) when compared with standard dG methods (elements). We can decompose the corrector functions into its element contributions
\begin{equation*}
 \phi^L_{T,j}=\sum_{T'\in\T_H:T'\subset\omega_T^L} \phi^L_{T,j}\chi_{T'},
\end{equation*}
where $\chi_{T'}$ is the indicator function of the element $T'\in\T_H$. 

This motivates the coarse approximation space
\begin{align*}
 \WmshHL=&\text{span}\bigl(\{\lambda_{T,j}\;\vert T\in\T_H,\;j=1,\ldots,r\}\\&\cup
\{\phi^L_{T,j}\chi_{T'}\;\vert T,T'\in\T_H,\;T'\subset\omega_T^L,\;j=1,\ldots,r\},
\end{align*}
This space offers the advantage of a known basis with element-wise support which leads to improved (localized) connectivity in the corresponding stiffness matrix. This is at the expense of a slight increase in the dimension of the space
$$\dime{\WmshHL} \approx L^d\dime{\VmshHL}.$$

The corresponding localized dG multiscale method seeks $\wmshHL\in\WmshHL$ such that
\begin{equation}\label{eq:DGMMc}
  \Bh{\wmshHL}{v} = F(v)\quad \text{for all }v\in\WmshHL.
\end{equation}

Since $\VmshHL\subset\WmshHL\subset \Vh$, Galerkin orthogonality yields
\begin{equation}\label{e:DGMMcbest}
\Enormh{u_h-\wmshHL}\leq\Enormh{u_h-\umshHL},
\end{equation}
i.e., the new localized version \eqref{eq:DGMMc} is never worse than the previous multiscale approximation in terms of accuracy. However, it may lead to very ill-conditioned element stiffness matrices (cf. Lemma~\ref{lem:decay_layers} which shows that $\phi^L_{T,j}\xi_{T'}$ may be very small if the distance between $T$ and $T'$ relative to their sizes is large). 

To circumvent ill-conditioning, one may choose a reduced local approximation space on the basis of a singular value decomposition. Since the dimension of the local approximation space is small (at most proportional to $L^d$), the cost for this additional preprocessing step is comparable with the cost for the solution of the local problems for the corrector functions.


To determine an acceptable level of truncation of the localized basis functions, we can use the the a posteriori error estimator contribution of the local problem from \cite{EGM12}, which is an estimation of the local fine scale error. This procedure may additionally lead to large reduction of the dimension of the local approximation spaces. 

\section{Error analysis}\label{s:apriori}
We present an a priori error analysis for the proposed multiscale method \eqref{eq:DGMM}. In view of \eqref{e:DGMMcbest}, this analysis applies immediately to the modified versions presented in Section  \ref{ss:compressed}. The error analysis will be split into a number of steps.
First, in Section \ref{projection_properties}, we present some properties of the coarse scale projection operator $\Lp$.
In Section \ref{Mdiscretization}, an error bound for dG multiscale method $\umshH$ from \eqref{eq:GMM} (\thmref{thm:umshH}) is shown, whereby the corrected basis functions are solved globally. Results for the decay of the localized corrected basis function (\lemref{lem:decay_layers} and \lemref{lem:discrete_finescale_decay}) are shown, along with an error bound for the dG multiscale method $\umshHL$ from \eqref{eq:DGMM} (\thmref{thm:discrete}) ,where the corrected basis functions are solved locally on element patches. Finally, in Section \ref{ss:quantity}, we show an error bound given a quantity of interest (\thmref{thm:qoi}), leading to an error bound in $L^2$-norm (\corref{cor:L2bound}).

We shall make use of the following (semi)norms.
The jump-seminorm and energy norms, associated with the coarse space $\VH$, are defined by
\begin{equation}\notag
  \begin{aligned}
    |\bullet|_{H}^2&:=\sum_{e\in\E_H(\Omega)\cup\E_H(\Gamma_D)} \frac{\sigma}{H_e}\|[\bullet]\|_{L^2(e)}^2, \\
\EnormH{\bullet}&:=(\|A^{1/2}\nabla_H\bullet\|_{L^2(\Omega)}^2+|\bullet|_H^2)^{1/2},
\end{aligned}
\end{equation}
respectively, along with a localized version of the local jump and energy norms \eqref{e:dgjumpnorm} and \eqref{e:dgnorm} on a patch $\omega\subseteq\Omega$, where $\omega$ is aligned with the mesh $\T_h$, given by
\begin{equation}\notag
\begin{aligned}
|\bullet|_{h,\omega}^2&:=\sum_{\substack{e\in\E(\Omega)\cup\E(\Gamma_D):\\e\cap\bar{\omega}\neq0}} \frac{\sigma}{h_e}\|[\bullet]\|_{L^2(e)}^2, \\
\Enormhw{\bullet}{\omega}&:=(\|A^{1/2}\nabla_h\bullet\|_{L^2(\omega)}^2+|\bullet|_{h,\omega}^2)^{1/2}.
\end{aligned}
\end{equation}
The shape-regularity assumptions $h_T\approx h_e$ for all $e\in\partial T:T\in\T_h$ and $H_T\approx H_e$ for all $T\in\partial T:T\in\T_H$ will also be used. 


\subsection{Properties of the coarse scale projection operator $\Lp$}\label{projection_properties}
The following lemma gives stability and approximation properties of the operator $\Lp$.
\begin{lemma}\label{lem:Lp_approx}
  For any $v\in\Vh$, the estimate
  \begin{align}\notag
    H^{-1}\|v-\Lp v\|_{L^2(T)}  \lesssim\amin^{-1/2}\Enormhw{v}{T},
  \end{align}
  is satisfied for all $T\in\T_H$. Moreover, it holds
  \begin{align}\notag
    \amax^{-1/2}\EnormH{\Lp v}+\|H^{-1}(v-\Lp v)\|_{L^2(\Omega)}  \lesssim\amin^{-1/2}\Enormh{v}.
  \end{align}
\end{lemma}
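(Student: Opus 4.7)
The plan is to combine the $L^2(T)$-best approximation property of $\Pi_H$ on a single coarse element $T$ with a Brenner-type discrete Poincaré inequality for $\mathcal{T}_h$-piecewise $H^1$ functions on $T$:
\begin{equation*}
\|v - \bar v_T\|^2_{L^2(T)} \lesssim H_T^2 \|\nabla_h v\|^2_{L^2(T)} + H_T \sum_{e\in\mathcal{E}_h,\, e\cap\bar T\neq\emptyset}\|[v]\|^2_{L^2(e)},
\end{equation*}
where $\bar v_T$ denotes the element mean of $v$ on $T$. Since $h_e \lesssim H_T$ on every edge in this sum, the jump term is $\lesssim H_T^2 \sigma^{-1} |v|^2_{h,T}$, while $\|\nabla_h v\|^2_{L^2(T)} \leq \alpha^{-1}\|A^{1/2}\nabla_h v\|^2_{L^2(T)}$ absorbs the diffusion coefficient. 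The first bound of the lemma then follows from the $L^2$-best approximation property $\|v - \Pi_H v\|_{L^2(T)} \leq \|v - \bar v_T\|_{L^2(T)}$ and division by $H_T$.

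For the second bound, the $L^2$-piece is obtained by squaring the first estimate and summing over $T \in \mathcal{T}_H$, using that each fine edge is shared by at most two coarse elements. The coarse-gradient contribution to $\EnormH{\Pi_H v}$ is controlled via the elementwise inverse estimate: writing $\Pi_H v - \Pi_0(\mathcal{T}_H) v$ as the mean-free part of $\Pi_H v$ on each coarse element,
\begin{equation*}
\|\nabla(\Pi_H v)\|_{L^2(T)} = \|\nabla(\Pi_H v - \Pi_0(\mathcal{T}_H) v)\|_{L^2(T)} \lesssim H_T^{-1}\|\Pi_H v - \Pi_0(\mathcal{T}_H) v\|_{L^2(T)} \leq H_T^{-1}\|v - \Pi_0(\mathcal{T}_H) v\|_{L^2(T)},
\end{equation*}
where the last step uses the $L^2$-best approximation property once more (applied with the coarser target space $\mathcal{P}_0(T)\subset\mathcal{P}_1(T)$). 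The right-hand side is then bounded by the broken Poincaré inequality, and multiplication by $\beta^{1/2}$ produces the coarse-gradient contribution with the correct scaling.

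The coarse jump seminorm $|\Pi_H v|_H$ is the main technical obstacle. On a coarse face $e\subset\partial T^-\cap\partial T^+$, I would decompose $[\Pi_H v]|_e = [\Pi_H v - v]|_e + [v]|_e$. Since $e$ is a disjoint union of fine faces $e'$ with $h_{e'}\leq H_e$,
\begin{equation*}
\frac{\sigma}{H_e}\|[v]\|^2_{L^2(e)} = \frac{\sigma}{H_e}\sum_{e'\subset e}\|[v]\|^2_{L^2(e')} \leq \sum_{e'\subset e}\frac{\sigma}{h_{e'}}\|[v]\|^2_{L^2(e')},
\end{equation*}
which, summed over all coarse faces, is bounded by $|v|^2_h$. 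For the $[\Pi_H v - v]|_e$ contribution, $(\Pi_H v - v)|_{T^\pm}$ is $\mathcal{T}_h$-piecewise polynomial on $T^\pm$ with interior-face jumps equal to $-[v]$. A broken trace inequality on $T^\pm$ then bounds $\|\Pi_H v - v\|^2_{L^2(e)}$ by $H_{T^\pm}^{-1}\|\Pi_H v - v\|^2_{L^2(T^\pm)} + H_{T^\pm}\|\nabla_h(\Pi_H v - v)\|^2_{L^2(T^\pm)}$ plus an interior-jump contribution $\lesssim H_{T^\pm}\sigma^{-1}|v|^2_{h,T^\pm}$. Each piece is already controlled by the first estimate together with the elementwise inverse bound just derived; scaling by $\sigma/H_e$ and summing over coarse faces yields $|\Pi_H v|_H^2 \lesssim \beta\alpha^{-1}\Enormh{v}^2$. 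Making the broken trace inequality precise so that all $H/h$-scalings cancel and the final bound depends only on $\Enormh{v}$ (independently of $h$) is the delicate step.
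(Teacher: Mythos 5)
Your route is genuinely different from the paper's. The paper's proof rests entirely on the conforming enrichment operator $\Ich:\Vh\to\Vh\cap H^1(\Omega)$ of Karakashian--Pascal: it splits $v=\Ich v+(v-\Ich v)$ and treats the two parts with the classical Poincar\'e inequality and the bound $\amax^{-1/2}\|A^{1/2}\nabla_h(v-\Ich v)\|_{L^2(\Omega)}+\|h^{-1}(v-\Ich v)\|_{L^2(\Omega)}\lesssim\amin^{-1/2}|v|_h$, respectively. You instead invoke Brenner's broken Poincar\'e inequality on each coarse element together with the $L^2$-best-approximation property of $\Lp$. This gives the first estimate cleanly, and arguably more transparently than the paper: the scaling $H_T\sum_{e}\|[v]\|^2_{L^2(e)}\lesssim H_T^2\sigma^{-1}|v|^2_{h,T}$ is correct, and so is the best-approximation step $\|v-\Lp v\|_{L^2(T)}\le\|v-\bar v_T\|_{L^2(T)}$. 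The $L^2$ and coarse-gradient parts of the second estimate are also sound; your inverse-estimate argument via $\|\nabla\Lp v\|_{L^2(T)}\lesssim H_T^{-1}\|v-\Pi_0(\T_H)v\|_{L^2(T)}$ is essentially what the paper does.

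The step you flag yourself is, however, a genuine gap: the bound on the coarse jump seminorm $|\Lp v|_H$ is not finished. The broken trace inequality you need on a coarse element $T$ with face $e\subset\partial T$, namely $\|w\|^2_{L^2(e)}\lesssim H_T^{-1}\|w\|^2_{L^2(T)}+H_T\|\nabla_h w\|^2_{L^2(T)}+H_T\sigma^{-1}|w|^2_{h,T}$ for $\T_h$-piecewise $H^1$ functions $w$, is true but cannot be obtained from elementwise discrete trace inequalities alone: applying the fine-scale trace inequality on the layer of fine elements along $e$ produces a factor $h^{-1}$ rather than $H_T^{-1}$, hence an uncontrolled factor $H/h$ after multiplying by $\sigma/H_e$. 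The standard proof of the $H$-scaled version splits $w=\Ich w+(w-\Ich w)$, uses the conforming scaled trace inequality for the first part, and combines the Karakashian--Pascal bound with the fine-scale discrete trace inequality for the second part (the powers of $h$ then cancel). In other words, closing your ``delicate step'' essentially forces you to introduce the same enrichment operator that the paper uses from the outset, so the two arguments merge at precisely the point where yours stops. With that inequality supplied, your proof is complete, modulo the same $\sigma$-versus-$\amax$ bookkeeping that the paper itself glosses over.
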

\begin{proof}
  Theorem 2.2 in \cite{Karakashian2003}, implies that for each $v\in \Vh$, there exists a bounded linear operator $\Ich:\Vh\rightarrow\Vh\cap H^1(\Omega)$ such that
\begin{equation}\label{e:DGGudiN3_h}
  \amax^{-1/2}\|\mscale^{1/2}\nabla_H(v-\Ich v)\|_{L^2(\Omega)} + \|h^{-1}(v-\Ich v)\|_{L^2(\Omega)}\lesssim \amin^{-1/2} |v|_h.
\end{equation}  
Split $v=v^{\operatorname*{c}}+v^{\operatorname*{d}}\in\V_h$ into a conforming, $v^{\operatorname*{c}}=\Ich v$, and non-conforming, $v^{\operatorname*{d}}=v-\Ich v$, part. We obtain
  \begin{equation}\label{eq:approxLP}
    \begin{aligned}
      H^{-1}\|v-\Lp v\|_{L^2(T)} &\leq H^{-1}(\|v^{\operatorname*{c}}-\Lp v^{\operatorname*{c}}\|_{L^2(T)}+\|v^{\operatorname*{d}}-\Lp v^{\operatorname*{d}}\|_{L^2(T)}) \\
      & \lesssim \|\nabla_h v\| +\|\nabla_h(v-v^{\operatorname*{c}})\|_{L^2(T)} + H^{-1}\|v^{\operatorname*{d}}\|_{L^2(T)}) \\
      & \lesssim \amin^{-1/2}\Enormhw{v}{T}
    \end{aligned}      
  \end{equation}
  using the triangle inequality, stability of the $L^2$-projection, and \eqref{e:DGGudiN3_h}.
  Also,
  \begin{equation}\notag
    \begin{aligned}
      \EnormH{\Lp v}^2 =& \sum_{T\in\T_H}\|\sqrt{\mscale}\nabla (\Lp v -\Pi_0(\T_H)v)\|^2_{L^2(T)}+\sum_{e\in\Gamma}\frac{\sigma}{H}\|[v_c-\Lp v]\|^2_{L^2(e)} \\
       \lesssim& \sum_{T\in\T_H}\amax\left(\frac{1}{H^2}\|v-\Pi_0(\T_H)v\|^2_{L^2(T)}+\frac{1}{H^2}\|v_c-\Lp v\|^2_{L^2(T)}\right)\\
      \lesssim & \ba^2\Enormh{v}^2,
    \end{aligned}
  \end{equation}
using the triangle inequality, \eqref{e:DGGudiN3_h}, and \eqref{eq:approxLP} which concludes the proof.
\end{proof}

The operator $\Lp$ is surjective. The next lemma shows that, given some $v_H\in \VH$ in the image of $\Lp$ there exists a $H^1$-conforming pre-image $v\in \Lp^{-1}v_H\subset \Vh$ with comparable support.
\begin{lemma}\label{lem:property_Lp}
  For each $v_H\in\VH$, there exists a $v\in \Vh\cap H^1(\Omega)$ such that $\Lp v = v_H$, $\Enormh{v}\lesssim \ba\EnormH{v_H}$, and $\text{supp}(v)\subseteq\text{supp}(v_H)$.
\end{lemma}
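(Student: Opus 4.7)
The plan is to construct $v$ explicitly as $v = L + b$, with $L \in \VH \cap H^1(\Omega)$ a Karakashian--Pascal-type nodally averaged reconstruction of $v_H$ and $b$ an elementwise bubble correction that restores the identity $\Lp v = v_H$. I define $L$ at each coarse vertex $x$ as the arithmetic mean of the values $v_H|_T(x)$ over all $T \in \T_H$ containing $x$, with the modification $L(x) := 0$ whenever $x \in \Gamma_D$ or $x$ is a vertex of some $T \in \T_H$ with $v_H|_T \equiv 0$. This yields $L \in \VH \cap H^1(\Omega)$ and, since every vertex of any $T \notin \supp(v_H)$ is zeroed, $\supp(L) \subseteq \supp(v_H)$; moreover, the analysis underlying \eqref{e:DGGudiN3_h} transferred from $\T_h$ to $\T_H$ supplies both the stability $\|A^{1/2}\nabla L\|_{L^2(\Omega)}^2 \lesssim \ba^2\,\EnormH{v_H}^2$ and the local approximation bound $\|v_H - L\|_{L^2(T)}^2 \lesssim H_T^2 \sigma^{-1}\sum_{e \subset \partial T}(\sigma/H_e)\|[v_H]\|_{L^2(e)}^2$ for each $T \in \T_H$.

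On each $T \in \T_H$ I fix a symmetric bubble $B_T \in \Vh$ with $\supp(B_T) \subset T$ and $B_T > 0$ in the interior (for instance, the fine-mesh nodal interpolant of the product of the barycentric coordinates of $T$), and introduce the linear map $\Phi_T \colon P_1(T) \to P_1(T)$, $\Phi_T(p) := \Pi_1(T)(p\,B_T)$. A scaling argument on the reference element shows $\Phi_T$ is invertible with $\|\Phi_T^{-1}\|$ bounded independently of $H_T$ under shape regularity. With
\[
b|_T := \Phi_T^{-1}(v_H|_T - L|_T)\,B_T, \qquad v := L + b,
\]
each $b|_T \in H^1_0(T) \cap \Vh$, so $v \in \Vh \cap H^1(\Omega)$, and
\[
\Pi_1(T)(v|_T) = L|_T + \Phi_T\bigl(\Phi_T^{-1}(v_H|_T - L|_T)\bigr) = v_H|_T,
\]
giving $\Lp v = v_H$. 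Whenever $v_H|_T = 0$ the modified averaging forces $L|_T = 0$ and hence $b|_T = 0$, so $v|_T = 0$; this secures the support inclusion.

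For the energy estimate, $v \in H^1(\Omega)$ makes the jump seminorm of $v$ vanish, so $\Enormh{v}^2 = \|A^{1/2}\nabla v\|_{L^2(\Omega)}^2 \leq 2\amax(\|\nabla L\|^2 + \|\nabla b\|^2)$. The bound on $\|\nabla L\|$ is from the first step. For $b$, the product rule together with the $H_T$-uniform bound on $\Phi_T^{-1}$ and the block-diagonal form of $\Phi_T$ in the reference basis of constants plus centred linears (enforced by the symmetry of $B_T$) yields elementwise
\[
\|\nabla b\|_{L^2(T)}^2 \lesssim \|\nabla(v_H - L)\|_{L^2(T)}^2 + H_T^{-2}\|v_H - L\|_{L^2(T)}^2.
\]
Summing over $T$ and inserting the approximation estimate for $L$ gives $\|\nabla b\|_{L^2(\Omega)}^2 \lesssim \|\nabla_H v_H\|^2 + \sigma^{-1}|v_H|_H^2$, and assembling everything yields the claim $\Enormh{v} \lesssim \ba\,\EnormH{v_H}$. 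The main obstacle is the $H_T$-uniform bound on $\|\Phi_T^{-1}\|$ measured in the gradient seminorm: the symmetric choice of $B_T$ is what prevents a spurious $H_T^{-1}$ factor from leaking in when the gradient of $b$ falls on the constant part of $v_H|_T - L|_T$.
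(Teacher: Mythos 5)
Your proof takes essentially the same route as the paper: a Karakashian--Pascal-type conforming averaging of $v_H$ plus an elementwise $H^1_0$ bubble correction chosen so that its $L^2$-projection onto $P_1(\T_H)$ restores $v_H-L$ on each element, with the energy bound coming from the stability/approximation of the averaging operator and a scaling argument for the bubbles. The only differences are cosmetic: you build the bubbles by inverting the map $p\mapsto\Pi_1(T)(pB_T)$ where the paper simply posits bubbles $\theta_{T,j}$ with $\Lp\theta_{T,j}=\lambda_{T,j}$ (deferring their construction to a remark), and you handle the support inclusion more explicitly by zeroing nodal averages at vertices of elements outside $\supp(v_H)$.
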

\begin{proof}
Using \eqref{e:DGGudiN3_h} but on space $\VH$ gives, for each $v\in \VH$, there exists a bounded linear operator $\IcH:\VH\rightarrow\VH\cap H^1(\Omega)$ such that
\begin{equation}\label{e:DGGudiN3}
\amax^{-1/2}\|\mscale^{1/2}\nabla_H(v-\IcH v)\|_{L^2(\Omega)} + \|H^{-1}(v-\IcH v)\|_{L^2(\Omega)}\lesssim \amin^{-1/2} |v|_H.
\end{equation}
We define
  \begin{equation}\notag
    v := \IcH v_H+\hspace{-3ex}\sum_{T\in\T_H,\;j=1,\ldots,r}\hspace{-3ex}\left( v_H(x_j)-\IcH v_H(x_j) \right)\theta_{T,j},
  \end{equation}
  where $\theta_{T,j}\in \Vh\cap H^1_0(T)$ are bubble functions, supported on each element $T$, with $\Lp \theta_{T,j} = \lambda_{T,j}$ and $\Enormh{\theta_{T,j}}\lesssim\amax(T)H^{d-2}$. Observe that $\text{supp}(v)\subseteq\text{supp}(v_H)$.  The interpolation property follows from
  \begin{equation}
    \begin{aligned}\notag
      \Lp v &= \IcH v_H+\Lp\hspace{-3ex}\sum_{T\in\T_H,\;j=1,\ldots,r}\hspace{-3ex}\left( v_H(x_j)-\IcH v_H(x_j) \right)\theta_j, \\
      & =  \IcH v_H   +  v_H -  \IcH v_H = v_H.
    \end{aligned}
  \end{equation}

To prove stability, we estimate $\Enormh{v}$ as follows:
  \begin{equation}
    \begin{aligned}\notag
      \Enormh{v}^2 & \leq \|\mscale^{1/2}\nabla\IcH v_H\|^2_{L^2(\Omega)}+\hspace{-3ex}\sum_{T\in\T_H,\;j=1,\ldots,r}\hspace{-3ex}\left| v_H(x_j)-\IcH v_H(x_j) \right|^2\Enormh{\theta_j}^2\\
      &\lesssim \|\mscale^{1/2}\nabla_H\IcH v_H\|^2_{L^2(\Omega)}+\amax\|H^{-1}( v_H-\IcH v_H) \|^2_{L^2(\Omega)}\\
      &\lesssim \ba^2 \EnormH{v_H}^2
    \end{aligned}
  \end{equation}
  using the inverse estimate $\|v\|_{L^\infty(T)}\leq H^{-d/2}\|v\|_{L^2(T)}$ for all $v\in\VH$, and the estimate \eqref{e:DGGudiN3}.
\end{proof}
\begin{rem}
  Note that $\theta_{T,j}\in\Vh\cap H^1_0(T)$ for all $T\in\T_H$ (fulfilling the conditions in \lemref{lem:property_Lp}) can be constructed using two (or more) refinements of the coarse scale parameter $H$. We can let $\theta_{T,j}\in\V_{h'}\cap H^1_0(T)$ where $\V_{h'}\subset\V_h$ and $h\leq h'\leq 2^{-2}H$. This does not put a big restriction on $h$ since the mesh $\T_h$ is assumed to be sufficiently fine to resolve the variation in the coefficient $\mscale$, while the parameter $H$ does not need to resolve $\mscale$.
\end{rem}

The following lemma says that the corrected basis is stable with respect to the fine scale parameter $h$ in the energy norm \eqref{e:dgnorm}, this is not a trivial result since the basis function $\{\lambda_{T,j}|T\in\T_H,\;j=1,\ldots,r\}$ are discontinuous.
\begin{lemma}[Stability of the corrected basis functions]\label{l:stabbasis}
  For all, $T\in\T_H,\;j=1,\ldots,r$ and $L>0\in\mathbb{N}$, the estimate
  \begin{equation}\notag
    \Enormh{\lambda_{T,j}-\phi^L_{T,j}} \lesssim \ba\EnormH{\lambda_{T,j}},
  \end{equation}
  is satisfied, independently of the fine scale parameter $h$.
\end{lemma}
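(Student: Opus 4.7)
The plan is to combine a Céa-type best-approximation inequality for the local corrector problem \eqref{eq:corrector} with the $H^1$-conforming pre-image construction from \lemref{lem:property_Lp} to build an explicit test function that realizes the desired bound.

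First, I would exploit the variational definition of $\phi^L_{T,j}$. Since $a_h$ is symmetric, coercive and bounded on $\Vh$ with respect to $\Enormh{\cdot}$, and since $a_h(\lambda_{T,j} - \phi^L_{T,j},z)=0$ for every $z \in \Vfh(\omega^L_T)$, the usual quasi-optimality (Céa) argument yields
\begin{equation}\notag
\Enormh{\lambda_{T,j}-\phi^L_{T,j}} \;\lesssim\; \inf_{w\in \Vfh(\omega^L_T)} \Enormh{\lambda_{T,j}-w}.
\end{equation}
So it is enough to exhibit a single test function $w\in\Vfh(\omega_T^L)$ for which $\Enormh{\lambda_{T,j}-w}$ is controlled by $\ba\EnormH{\lambda_{T,j}}$.

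Second, I would construct such a $w$ via \lemref{lem:property_Lp}. Applying that lemma to $v_H := \lambda_{T,j} \in \VH$ produces an $H^1$-conforming function $v\in\Vh \cap H^1(\Omega)$ with $\Lp v = \lambda_{T,j}$, $\supp(v)\subseteq\supp(\lambda_{T,j})=T$, and $\Enormh{v}\lesssim \ba\,\EnormH{\lambda_{T,j}}$. Set $w := \lambda_{T,j}-v$. Then $\Lp w = \lambda_{T,j}-\lambda_{T,j} = 0$, so $w\in\Vfh$, and $\supp(w)\subseteq T\subset\omega_T^L$, hence $w\in\Vfh(\omega_T^L)$. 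With this choice, $\lambda_{T,j}-w = v$, and therefore
\begin{equation}\notag
\Enormh{\lambda_{T,j}-\phi^L_{T,j}}\;\lesssim\;\Enormh{\lambda_{T,j}-w}\;=\;\Enormh{v}\;\lesssim\;\ba\,\EnormH{\lambda_{T,j}},
\end{equation}
which is the desired estimate, and the bound is manifestly independent of $h$.

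The main subtlety, and the reason the statement is nontrivial, is that $\lambda_{T,j}$ is piecewise polynomial on $\T_H$ and is in general discontinuous across the interior faces of $\omega_T^L$, so its jump-seminorm contribution to $\Enormh{\lambda_{T,j}}$ blows up like $h^{-1/2}$ along the face and cannot be absorbed directly. The key insight that makes the argument go through is that one is free to cancel $\lambda_{T,j}$ against a bubble-enriched $H^1$-conforming correction $v$ of matching support; this is exactly the content of \lemref{lem:property_Lp}, whose bound already has the $\ba$-factor baked in. All other estimates are soft (symmetry, coercivity, and boundedness of $a_h$), so the proof reduces to verifying that $\lambda_{T,j}-v$ lies in the correct fine-scale space $\Vfh(\omega_T^L)$, which the support and projection properties of $v$ guarantee.
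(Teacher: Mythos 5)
Your proposal is correct and follows essentially the same route as the paper's proof: both exploit the Galerkin orthogonality $a_h(\lambda_{T,j}-\phi^L_{T,j},z)=0$ for $z\in\Vfhw{\omega_T^L}$ (you phrase it as C\'ea quasi-optimality, the paper writes it out directly) and both use \lemref{lem:property_Lp} applied to $\lambda_{T,j}$ to produce the competitor $\lambda_{T,j}-v\in\Vfhw{\omega_T^L}$ whose distance to $\lambda_{T,j}$ is the conforming pre-image, bounded by $\ba\EnormH{\lambda_{T,j}}$. Your $v$ and $w$ are exactly the paper's $b$ and $v$, so nothing further is needed.
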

\begin{proof} For any $T\in\T_H,\;j=1,\ldots,r$, by \lemref{lem:property_Lp} there exists a $b$ such that $v=\lambda_{T,j}-b\in\Vfhw{\omega_T^L}$, and $\Enormh{b}\lesssim\ba\EnormH{\lambda_{T,j}}$. We have
  \begin{equation}\notag
    \begin{aligned}
    \Enormh{\lambda_{T,j}-\phi^L_{T,j}}^2 &\lesssim \Bh{\lambda_{T,j}-\phi^L_{T,j}}{\lambda_{T,j}-\phi^L_{T,j}}=\Bh{\lambda_{T,j}-\phi^L_{T,j}}{\lambda_{T,j}-v}, \\
&\lesssim \Bh{\lambda_{T,j}-\phi^L_{T,j}}{b}\lesssim \ba\Enormh{\lambda_{T,j}-\phi^L_{T,j}}\EnormH{\lambda_{T,j}},
    \end{aligned}
  \end{equation}
  which concludes the proof.
\end{proof}

\subsection{A priori estimates}\label{Mdiscretization}
The following theorem gives an error bound for the idealized dG multiscale method, whereby the correctors for the basis are solved globally.
\begin{theorem}\label{thm:umshH}
  Let $u_h\in\Vh$ solve \eqref{e:DGweak} and $\umshH\in\VmshH$ solve \eqref{eq:DGMM}, then the estimate
  \begin{equation}\notag
    \Enormh{u_h-\umshH} \leq C_1\amin^{-1/2}|| H(f-\Lp f ) ||_{L^2(\Omega)},
  \end{equation}
  is satisfied, where $C_1$ neither depends on the mesh ($h$ or $H$) size nor the diffusion matrix $\mscale$.
\end{theorem}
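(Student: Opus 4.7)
The plan is to exploit the $a_h$-orthogonal multiscale decomposition (Lemma~\ref{l:odh1}) to identify the error with the fine-scale Galerkin projection, and then to convert its energy norm into an $L^2$-type duality estimate where the oscillatory right-hand side $f-\Lp f$ naturally appears.

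First, I would decompose $u_h = u_h^{ms} + u_h^f$ according to Lemma~\ref{l:odh1}. Since $u_h^{ms}\in\VmshH$ and $u_h^{ms}$ satisfies $a_h(u_h^{ms},v)=F(v)$ for all $v\in\VmshH$ by that lemma, uniqueness of the Galerkin solution \eqref{eq:GMM} gives $\umshH=u_h^{ms}$, so the error equals $u_h-\umshH=u_h^f\in\Vfh$, the fine-scale part.

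Next, I would test the defining equation for $u_h^f$ with itself. Coercivity of $a_h$ on $\Vh$ together with $a_h(u_h^f,u_h^f)=F(u_h^f)=(f,u_h^f)_{L^2(\Omega)}$ yields
\begin{equation}\notag
\Enormh{u_h^f}^2 \lesssim (f,u_h^f)_{L^2(\Omega)}.
\end{equation}
Now comes the key step: since $u_h^f\in\Vfh$ we have $\Lp u_h^f=0$, so $u_h^f = u_h^f-\Lp u_h^f$. Using that $\Lp$ is the $L^2$-projection onto $\VH$ (Lemma~\ref{l:odl2}) and that $\Lp f\in\VH$, the $L^2$-orthogonality lets me insert $\Lp f$ for free:
\begin{equation}\notag
(f,u_h^f)_{L^2(\Omega)} = (f,u_h^f-\Lp u_h^f)_{L^2(\Omega)} = (f-\Lp f,\,u_h^f-\Lp u_h^f)_{L^2(\Omega)}.
\end{equation}

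Finally, I would apply Cauchy--Schwarz, inserting powers of $H$ to match the right-hand side of the theorem, and bound $\|H^{-1}(u_h^f-\Lp u_h^f)\|_{L^2(\Omega)}\lesssim \amin^{-1/2}\Enormh{u_h^f}$ by the approximation property in Lemma~\ref{lem:Lp_approx}. Dividing by $\Enormh{u_h^f}$ produces the stated bound, with $C_1$ absorbing the coercivity constant and the hidden constant in Lemma~\ref{lem:Lp_approx}. The main (conceptually interesting) step is the $L^2$-orthogonality trick that transfers the oscillations from the test function to the forcing, replacing $f$ by $f-\Lp f$; everything else is Cauchy--Schwarz plus the previously established approximation estimate for $\Lp$.
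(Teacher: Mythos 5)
Your proposal is correct and follows essentially the same route as the paper's proof: identify the error with the fine-scale component $u_h^f\in\Vfh$ via Lemma~\ref{l:odh1}, use coercivity and $a_h(u_h^f,u_h^f)=(f,u_h^f)_{L^2(\Omega)}$, apply the $L^2$-orthogonality of Lemma~\ref{l:odl2} to replace $f$ by $f-\Lp f$ and $u_h^f$ by $u_h^f-\Lp u_h^f$, and finish with the weighted Cauchy--Schwarz inequality and Lemma~\ref{lem:Lp_approx}. Your write-up is in fact slightly more explicit than the paper's (e.g.\ in justifying $\umshH=u_h^{ms}$ by uniqueness of the Galerkin solution), but there is no substantive difference.
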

\begin{proof}
  Let $e := u_h-\umshH=u_f\in \Vfh$, then
  \begin{equation}\notag
    \begin{aligned}
    \Enormh{e}^2 &\lesssim \Bh{e}{e}= (f,e)_{L^2(\Omega)} = (f-\Lp f,e-\Lp e)_{L^2(\Omega)} \\
    & \leq ||H(f-\Lp f)||_{L^2(\Omega)}||H^{-1}(e-\Lp e)||^2_{L^2(\Omega)}\\
    & \lesssim \frac{1}{\sqrt{\amin}}||H (f-\Lp f)||_{L^2(T)}\Enormh{e},
    \end{aligned}
  \end{equation}
  using \lemref{l:odh1}, \lemref{l:odl2}, Cauchy-Schwarz inequality, and \lemref{lem:Lp_approx}, respectively.
\end{proof}

\begin{definition}
  The cut off functions $\zeta_T^{d,D}\in P_0(\T_h)$ are defined by the conditions
  \begin{equation}\notag
    \begin{aligned}
      \zeta_T^{d,D}|_{\omega_T^d} &= 1, \\
      \zeta_T^{d,D}|_{\Omega\setminus\omega^D_T} &= 0, \\
      \|[\zeta_T^{d,D}]\|_{L^\infty(T)}& \lesssim \frac{h_T}{(D-d)H_T}\quad\text{for all }T\in\T_H,
    \end{aligned}
  \end{equation}
  and that $\zeta_T^{d,D}$ is constant on the boundary $\partial(\omega^D_T\setminus\omega^d_T)$.
\end{definition}

The next lemma shows the exponential decay in the corrected basis, which is a key result in the analysis.
\begin{lemma}\label{lem:decay_layers}
   For all $T\in\T_H,\;j=1,\ldots,r$, the estimate
  \begin{equation}\notag
    \Enormh{(\lambda_j-\phi_{T,j})-(\lambda_j-\phi^L_{T,j})} =\Enormh{\phi_{T,j}-\phi^L_{T,j}} \leq  C_3\gamma^L\Enormh{\phi_{T,j}-\lambda},
  \end{equation}
  is satisfied, with $C_3=C\ba^3$, $0<\gamma<1$ given by $\gamma := (\frac{ C_2}{\ell})^{\frac{k-1}{2\ell}}$, $C_2=C'\ba^2$, and $L = k\ell$, $k,\ell\geq2\in\mathbb{N}$, noting that $C$ and $C'$ are positive constants that are independent of the mesh ($h$ or $H$), of the patch size $L$, and of the diffusion matrix $\mscale$.
\end{lemma}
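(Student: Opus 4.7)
Set $z := \phi_{T,j} - \phi^L_{T,j} \in \Vfh$. Subtracting the defining relation \eqref{eq:multiscale-map} (applied to $w \in \Vfh(\omega^L_T) \subset \Vfh$) from \eqref{eq:corrector} gives the partial Galerkin orthogonality
\[
a_h(z, w) = 0 \quad \text{for all } w \in \Vfh(\omega^L_T).
\]
Coercivity of $a_h$ on $\Vfh$ therefore yields $\Enormh{z}^2 \lesssim \ba\, \Enormh{z}\,\Enormh{z - w}$ for every admissible $w$, so the task reduces to constructing $w \in \Vfh(\omega^L_T)$ whose difference from $z$ has energy concentrated near $\partial \omega^L_T$.

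\textbf{One-step localization via a cut-off.} Write $L = k\ell$ with $k, \ell \geq 2$ and take the cut-off $\zeta := \zeta_T^{(k-1)\ell, L} \in P_0(\T_h)$. The product $\zeta z$ lies in $\Vh$ with $\supp(\zeta z) \subseteq \omega^L_T$, but in general $\Lp(\zeta z) \neq 0$ so $\zeta z \notin \Vfh$. Apply \lemref{lem:property_Lp} to $\Lp(\zeta z) \in \VH$ to obtain $b \in \Vh \cap H^1(\Omega)$ with $\Lp b = \Lp(\zeta z)$, $\supp b \subseteq \omega^L_T$, and $\Enormh{b} \lesssim \ba\, \EnormH{\Lp(\zeta z)}$; then $w := \zeta z - b \in \Vfh(\omega^L_T)$ is the desired test function. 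Since $z - w = (1-\zeta)z + b$, the approximation error splits into (i) the energy of $z$ on the outer annulus $\omega^L_T \setminus \omega^{(k-1)\ell}_T$, where $1-\zeta$ is nonzero, and (ii) the correction $b$, whose energy is, via \lemref{lem:Lp_approx}, controlled by $\Enormhw{\zeta z}{\omega^L_T \setminus \omega^{(k-2)\ell}_T}$. The bound $\|[\zeta]\|_{L^\infty} \lesssim h_T/(\ell H_T)$ in the definition of $\zeta_T^{d,D}$ transforms into a factor $\ell^{-1}$ on the jump contribution, after an inverse estimate on the coarse mesh.

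\textbf{Iteration across $k$ shells.} Combining the one-step bound recursively on the nested patches $\omega^{j\ell}_T$ for $j = k, k-1, \ldots, 1$, each iteration absorbs an additional factor $C_2/\ell$ (with $C_2 \lesssim \ba^2$) at the cost of enlarging the annulus under consideration by one layer of width $\ell$. Telescoping the resulting geometric chain of inequalities, and using \lemref{l:stabbasis} to convert the remaining global energy to $\Enormh{\phi_{T,j} - \lambda_{T,j}}$, yields the stated estimate. The precise accounting of how the contraction factor accumulates against the total width $L = k\ell$ produces $\gamma = (C_2/\ell)^{(k-1)/(2\ell)}$; the three powers of $\ba$ in $C_3$ come from one factor in the coercivity/continuity of $a_h$ on $\Vfh$ and two further factors accumulated across the cut-off and $\Lp$-correction steps.

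\textbf{Expected main obstacle.} The principal technical difficulty is dG-specific: multiplying $z \in \Vfh$ by the $\T_h$-piecewise-constant cut-off $\zeta$ introduces additional jumps across the transition faces, so $\zeta z$ leaves $\Vfh$, and the commutators between $\zeta$ and the projections $\Lp$, $\F$ must be bounded in the jump-seminorm. Tracking these jump contributions carefully through \lemref{lem:Lp_approx} and \lemref{lem:property_Lp} is exactly what forces the $\ba^3$ dependence in $C_3$ and the $\ell^{-1}$ factor inside $\gamma$; once these commutator estimates are in place, the shell iteration is the standard Målqvist–Peterseim telescoping adapted to the discontinuous setting.
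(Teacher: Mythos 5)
Your overall architecture matches the paper's: Galerkin orthogonality of $\phi_{T,j}-\phi^L_{T,j}$ against $\Vfhw{\omega^L_T}$, a C\'ea-type reduction to a best-approximation problem, a piecewise-constant cut-off corrected back into the fine-scale space via \lemref{lem:property_Lp}, and an iteration over $k$ shells of width $\ell$; you also correctly identify the dG-specific difficulty (the extra jumps created by multiplying by a $\T_h$-piecewise-constant cut-off, which the paper handles with the identities \eqref{eq:A3}--\eqref{eq:A4}). But there is a genuine gap at the core of the iteration. The ``one-step bound'' you set up controls the \emph{global} quantity $\Enormh{z}$ by the energy of $z$ on an outer annulus; such a bound cannot be ``combined recursively on the nested patches,'' because its right-hand side is not of the same form as its left-hand side. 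What the paper actually iterates is a Caccioppoli-type inequality for the \emph{global} corrector, $\Enormhw{\phi_{T,j}}{\Omega\setminus\omega^{M}_T}^2 \leq \tfrac{C_2}{\ell}\,\Enormhw{\phi_{T,j}}{\Omega\setminus\omega^{m}_T}^2$ with $M-m=\ell$, and the mechanism that produces it appears nowhere in your plan: since $a_h(\phi_{T,j},w)=a_h(\lambda_{T,j},w)$ for all $w\in\Vfh$ and $\supp\lambda_{T,j}=T$, the corrector is $a_h$-orthogonal to every fine-scale test function supported away from $T$; testing with $\eta_T^2\phi_{T,j}-b$ (where $\eta_T=1-\zeta_T^{m,M}$ and $b$ is the \lemref{lem:property_Lp} correction) makes $\Bh{\phi_{T,j}}{\eta_T^2\phi_{T,j}}$ collapse to terms supported on the annulus carrying a factor $\ell^{-1}$. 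Without this orthogonality-away-from-$T$ there is no source of decay and the recursion does not start.

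A second, more minor, miscount: you attribute the $\ell^{-1}$ gain to the first cut-off $\zeta_T^{(k-1)\ell,L}$. In the paper the first cut-off is the single-layer $\zeta_T^{\ell k-1,\ell k}$, whose jump is of size $h_T/H_T$ and yields no gain --- it only localizes $\Enormh{\phi_{T,j}-\phi^L_{T,j}}$ to $\ba^3\Enormhw{\phi_{T,j}}{\Omega\setminus\omega^{\ell k-1}_T}$. The factor $(C_2/\ell)^{k-1}$ is accumulated entirely in the $k-1$ Caccioppoli steps, each using a cut-off spreading over $\ell$ layers; a single wide cut-off applied once, as in your step (b), would produce only one power of $\ell^{-1/2}$, not $\gamma^L$. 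The final conversion of $\Enormh{\phi_{T,j}}$ to $\Enormh{\phi_{T,j}-\lambda_{T,j}}$ is as you say, but everything between the first localization and that last step needs the harmonicity argument made explicit.
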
 
\begin{proof}
  Define $e:=\phi_{T,j}-\phi^L_{T,j}=\phi_{T,j}-\phi^{\ell k}_{T,j}$. We have
  \begin{equation}\label{eq:decay1}
    \begin{aligned}
      \Enormh{e}^2 &\lesssim \Bh{e}{\phi_{T,j}-\phi^{\ell k}_{T,j}} = \Bh{e}{\phi_{T,j}-v} \lesssim \Enormh{e}\cdot\Enormh{\phi_{T,j}-v},
    \end{aligned}
  \end{equation}
  for $v\in\Vfhw{\omega^{\ell k}_T}$.
  Let $\zeta:=\zeta_{T}^{\ell k-1,\ell k}$; then by \lemref{lem:property_Lp} there exists a $b$ such that $v=\zeta\phi_{T,j} - b\in\Vfhw{\omega^{\ell k}_T}$,  $\Lp b = \Lp\zeta\phi_{T,j}$, $\Enormh{b}\lesssim\ba\EnormH{\Lp\zeta\phi_{T,j}}$, and $\text{supp}(b)\subseteq\text{supp}(\Lp\zeta\phi_{T,j})$. Then, we have 
  \begin{equation}\label{eq:decay2}
    \begin{aligned}
      \Enormh{\phi_{T,j}-v} &= \Enormh{\phi_{T,j} - (\zeta\phi_{T,j} - b)} \\ 
      &\leq\Enormh{\phi_{T,j} - \zeta\phi_{T,j}} + \Enormh{b} \\
      &\lesssim \Enormh{\phi_{T,j} - \zeta\phi_{T,j}} + \ba\EnormH{\Lp (\zeta\phi_{T,j}-\phi_{T,j})}\\
      &\lesssim \ba^2\Enormh{\phi_{T,j} - \zeta\phi_{T,j}}.
    \end{aligned}
  \end{equation}
 Furthermore, using the properties of $\zeta$ we have
  \begin{align}\label{eq:decay3}
    \|\sqrt{\mscale}\nabla_h(1-\zeta)\phi_{T,j}\|_{L^2(\Omega)}, 
\leq \|\sqrt{\mscale} \nabla_h\phi_{T,j} \|_{L^2(\Omega\setminus\omega^{\ell k-1}_T)},
  \end{align}
  and
  \begin{equation}\label{eq:decay4}
    \begin{aligned}
      & |(1-\zeta)\phi_{T,j} |^2_{h} = \sum_{e\in\E(\Omega)\cup\E(\Gamma_D)} \frac{\sigma}{h_e}\|[(1-\zeta)\phi_{T,j}]\|^2_{L^2(e)}  \\
      &\quad\leq \sum_{e\in\E(\Omega)\cup\E(\Gamma_D)}\frac{\sigma}{h_e}\left(\|\{1-\zeta\}[\phi_{T,j}]\|^2_{L^2(e)} + \|\{\phi_{T,j}\}[1-\zeta]\|^2_{L^2(e)} \right)\\
      &\quad\leq \sum_{\substack{e\in\E(\Omega)\cup\E(\Gamma_D):\\e\cap\Omega\setminus\omega^{\ell k-1}_T\neq0}}\left(\frac{\sigma}{h_e}\|[\phi_{T,j}]\|^2_{L^2(e)} +  \frac{\sigma h_T}{h_eH_T^2}\|\{\phi_{T,j}\}\|^2_{L^2(e)}\right) \\
      &\quad\leq \sum_{\substack{e\in\E(\Omega)\cup\E(\Gamma_D):\\e\cap\Omega\setminus\omega^{\ell k-1}_T\neq0}}\frac{\sigma}{h_e}\|[\phi_{T,j}]\|^2_{L^2(e)} + \frac{\sigma}{H_T^2}\|\phi_{T,j}-\Lp\phi_{T,j}\|^2_{L^2(\Omega\setminus\omega^{\ell k-1}_T)} \\
      &\quad\lesssim \ba^2\Enormhw{\phi_{T,j}}{\Omega\setminus\omega^{\ell k-1}_T},
    \end{aligned}
  \end{equation}
  using a trace inequality and \lemref{lem:Lp_approx}, respectively. Combining \eqref{eq:decay1}, \eqref{eq:decay2}, \eqref{eq:decay3}, and \eqref{eq:decay4} yields
  \begin{align}\label{eq:decay_11}
    \Enormh{\phi_{T,j} - \zeta\phi_{T,j}} \lesssim  \ba^3\Enormhw{\phi_{T,j}}{\Omega\setminus\omega^{\ell k-1}_T}.
  \end{align}
To simplify notation, let $m:=\ell (k-1)-1$ and $M:=\ell k -1$.
For $\eta_T:=1-\zeta_T^{m,M}$, we obtain
  \begin{equation}\label{eq:decay5}
    \Enormhw{\phi_{T,j}}{\Omega\setminus\omega^{M}_T}^2 \leq \Enormh{\eta_T\phi_{T,j}}^2\lesssim \Bh{\eta_T\phi_{T,j}}{\eta_T\phi_{T,j}},
  \end{equation}
where
  \begin{equation}\label{eq:decaycutofbilinear}
    \begin{aligned}
      &\Bh{\eta_T \phi_{T,j}}{\eta_T \phi_{T,j}} = (\mscale\nabla_h \eta_T \phi_{T,j},\nabla_h \eta_T \phi_{T,j})_{L^2(\Omega)} \\ &\qquad + \sum_{e\in \E(\Omega)\cup\E(\Gamma_D)}\left(-2(\{\nu\cdot\mscale\nabla \eta_T\phi_{T,j}\},[\eta_T\phi_{T,j}])+\frac{\sigma}{h_e}([\eta_T \phi_{T,j}],[\eta_T \phi_{T,j}])\right).
    \end{aligned}
  \end{equation}
  For the first term on the right-hand side of \eqref{eq:decaycutofbilinear}, we have
  \begin{equation}\label{eq:decay6}
    (\mscale\nabla_h \eta_T \phi_{T,j},\nabla_h \eta_T \phi_{T,j})_{L^2(\Omega)} = (\mscale\nabla_h \phi_{T,j},\nabla_h \eta_T^2 \phi_{T,j})_{L^2(\Omega)},
  \end{equation}
  since $\eta_T$ is constant on each element $T\in\T_h$; for the other terms we use \eqref{eq:A3} and \eqref{eq:A4} (with $v=\eta_T$, $w=\nu\cdot\mscale\nabla\phi_{T,j}$ and $u=\phi_{T,j}$). 
  We can, thus, arrive to
  \begin{equation}\label{eq:decay7}
    \begin{aligned}
      & \Enormhw{\phi_{T,j}}{\Omega\setminus\omega^{M}_T}^2 \leq \Bh{\eta_T\phi_{T,j}}{\eta_T\phi_{T,j}} = \Bh{\phi_{T,j}}{\eta_T^2\phi_{T,j}} \\
      &\quad+ \sum_{e\in\E(\Omega)}\big(1/2(\{\nu\cdot\mscale\nabla\phi_{T,j}\},[\eta_T]^2[\phi_{T,j}])_{L^2(e)}-1/4 ([\nu\cdot\mscale\nabla\phi_{T,j}],[\eta_T]^2\{\phi_{T,j}\})_{L^2(e)} \\
      &\quad -\frac{\sigma}{4h_e}([\eta_T]^2,[\phi_{T,j}]^2)_{L^2(e)} +\frac{\sigma}{h_e}([\eta_T]^2,\{\phi_{T,j}\}^2)_{L^2(e)} \big), 
    \end{aligned}
  \end{equation}
  using \eqref{eq:decay5}, \eqref{eq:decaycutofbilinear}, and \eqref{eq:decay6}.
  Note that,
  \begin{equation}\label{eq:decay8}
    \begin{aligned}
      &\sum_{e\in\E(\Omega)}\big(1/2(\{\nu\cdot\mscale\nabla\phi_{T,j}\},[\eta_T]^2[\phi_{T,j}])_{L^2(e)}-1/4 ([\nu\cdot\mscale\nabla\phi_{T,j}],[\eta_T]^2\{\phi_{T,j}\})_{L^2(e)} \\
      &\qquad -\frac{\sigma}{4h_e}([\eta_T]^2,[\phi_{T,j}]^2)_{L^2(e)} + \frac{\sigma}{h_e}([\eta_T]^2,\{\phi_{T,j}\}^2)_{L^2(e)} \big) \\
      & \lesssim \hspace{-10pt}\sum_{\substack{e\in\E(\Omega):\\e\cap\omega^{M}_T\setminus\omega^{m}_T\neq 0}}\hspace{-10pt}\frac{h_T^2}{\ell^2 H_T^2}\Big(\|\{\nu\cdot\mscale\nabla\phi_{T,j}\}\|_{L^2(e)}\|[\phi_{T,j}]\|_{L^2(e)}+\|[\nu\cdot\mscale\nabla\phi_{T,j}]\|_{L^2(e)}\|\{\phi_{T,j}\}\|_{L^2(e)} \\
       &\qquad +\sigma\left(\|[\phi_{T,j}]\|^2_{L^2(e)}+\|\{\phi_{T,j}\}\|^2_{L^2(e)}\right) \Big) \\
      & \lesssim \hspace{-10pt}\sum_{\substack{e\in\E(\Omega):\\e\cap\omega^{M}_T\setminus\omega^{m}_T\neq 0 }}\hspace{-10pt}\big(\frac{h_T}{\ell^2 H_T^2}\|\mscale\nabla\phi_{T,j}\|_{L^2(T^+\cup T^-)}\|\phi_{T,j}\|_{L^2(T^+\cup T^-)} +\frac{\sigma}{\ell^2 H_T^2}\|\phi_{T,j}\|_{L^2(T^+\cup T^-)}^2 \big) \\
      & \lesssim \amax \ell^{-2} \|H^{-1}_T(\phi_{T,j}-\Lp\phi_{T,j})\|^2_{L^2(\omega^{M}_T\setminus\omega^{m}_T)} \leq \ba^2\ell^{-2}\Enormhw{\phi_{T,j}}{\omega^{M}_T\setminus\omega^{m}_T}^2.
     \end{aligned}
   \end{equation}
  Now we bound the term
   \begin{equation}\label{eq:decay9}
     \begin{aligned}
       \Bh{\phi_{T,j}}{\eta_T^2\phi_{T,j}} & = \Bh{\phi_{T,j}}{\eta_T^2\phi_{T,j}-b}+\Bh{\phi_{T,j}}{b}= \Bh{\phi_{T,j}}{b} \\
       &\lesssim \Enormhw{\phi_{T,j}}{\omega^{M}_T\setminus\omega^{m}_T}\Enormhw{b}{\omega^{M}_T\setminus\omega^{m}_T}\\
      & \leq \ba\Enormhw{\phi_{T,j}}{\omega^{M}_T\setminus\omega^{m}_T}\EnormHw{\Lp \eta^2_T\phi_{T,j}}{\omega^{M}_T\setminus\omega^{m}_T}. \\
    \end{aligned}
  \end{equation}
Furthermore, we have that
\begin{equation}
  \begin{aligned}\label{eq:decay10}
    &\EnormHw{\Lp \eta^2_T\phi_{T,j}}{\omega^{M}_T\setminus\omega^{m}_T} = \EnormHw{\Lp (\eta^2_T-\Pi_0(\T_H)\eta^2_T)\phi_{T,j}}{\omega^{M}_T\setminus\omega^{m}_T}\\
  &\quad =  \|\sqrt{\mscale}\nabla_H \Lp(\eta^2_T-\Pi_0(\T_H)\eta^2_T)\phi_{T,j}\|^2_{L^2(\omega^{M}_T\setminus\omega^{m}_T)} \\ 
  & \qquad+ \sum_{\substack{e\in\E(\Omega)\cup\E(\Gamma_D):\\e\cap\overline{\omega^{M}_T\setminus\omega^{m}_T}\neq 0 }} \frac{\sigma}{H_e}\|[\Lp(\eta^2_T-\Pi_0(\T_H)\eta^2_T)\phi_{T,j}]\|^2_{L^2(e)} \\
  &\quad\lesssim  \amax\|H_e^{-1}\Lp(\eta^2_T-\Pi_0(\T_H)\eta^2_T)\phi_{T,j}\|^2_{L^2(\omega^{M}_T\setminus\omega^{m}_T)} \\
  &\quad\lesssim  \amax\|H_e^{-1}(\eta^2_T-\Pi_0(\T_H)\eta^2_T)\|_{L^\infty(T)}\|\phi_{T,j}\|^2_{L^2(\omega^{M}_T\setminus\omega^{m}_T)} \\
  &\quad\lesssim \amax\ell^{-2}\|H_e^{-1}(\phi_{T,j}-\Lp\phi_{T,j})\|^2_{L^2(\omega^{M}_T\setminus\omega^{m}_T)}  \\
  &\quad\lesssim \ba^2\ell^{-2}\Enormhw{\phi_{T,j}}{\omega^{M}_T\setminus\omega^{m}_T}^2 ,
  \end{aligned}
\end{equation}
using a trace inequality, inverse inequality, and \lemref{lem:Lp_approx}, respectively. Combining the inequalities \eqref{eq:decay7}, \eqref{eq:decay8}, \eqref{eq:decay9}, and \eqref{eq:decay10} yields
\begin{equation}\notag
  \Enormhw{\phi_{T,j}}{\Omega\setminus\omega^{M}_T}^2 \leq \frac{ C_2}{\ell}\Enormhw{\phi_{T,j}}{\omega^{M}_T\setminus\omega^{m}_T}^2 \leq \frac{ C_2}{\ell} \Enormhw{\phi_{T,j}}{\Omega\setminus\omega^{m}_T}^2.
  \end{equation}
where $ C_2 = C'\ba^2$. Substituting back to $\ell$ and $k$ and using a cut off function with a slightly different argument, yields
  \begin{equation}\notag
    \begin{aligned}
    &\Enormhw{\phi_{T,j}}{\Omega\setminus\omega^{\ell k -1}_T}^2 \leq  \frac{ C_2}{\ell}\Enormhw{\phi_{T,j}}{\Omega\setminus\omega^{\ell (k-1)-1}_T}^2 \leq (\frac{ C_2}{\ell})^2\Enormhw{\phi_{T,j}}{\Omega\setminus\omega^{\ell (k-2)-1}_T}^2 \\
    &\quad\leq \dots \leq (\frac{ C_2}{\ell})^{k-1}\Enormhw{\phi_{T,j}}{\omega^{\ell}\setminus\omega^{\ell-1}_T}^2.
    \end{aligned}
  \end{equation}
which concludes the proof together with \eqref{eq:decay_11}.
\end{proof}
\begin{lemma}\label{lem:discrete_finescale_decay}
  For all, $T\in\T_H,\;j=1,\ldots,r$, the estimate
  \begin{equation}\notag
    \Enormh{\hspace{-2ex}\sum_{T\in\T_H,\;j=1,\ldots,r}\hspace{-2ex}v_j(\phi_{T,j}-\phi^L_{T,j})}^2 \leq  C_4L^d\hspace{-2ex}\sum_{T\in\T_H,\;j=1,\ldots,r}\hspace{-2ex}|v_j|^2\Enormh{\phi_{T,j}-\phi^L_{T,j}}^2.
  \end{equation}
  is satisfied, with $C_4 = C\ba^3$ and $C$ positive constant independent of the mesh ($h$ or $H$), of the patch size $L$, and of the diffusion matrix $\mscale$.
\end{lemma}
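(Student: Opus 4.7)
The plan is to combine two structural facts: each point of $\Omega$ lies in at most $\mathcal{O}(L^d)$ of the patches $\{\omega^L_T:T\in\T_H\}$ (finite overlap), and by the definitions of $\phi_{T,j}$ and $\phi^L_{T,j}$ the corrector difference $\phi_{T,j}-\phi^L_{T,j}$ is $a_h$-orthogonal to $\Vfhw{\omega^L_T}$; an additional ingredient is the exponential decay of \lemref{lem:decay_layers}. The overlap property is what yields the $L^d$ factor on the right-hand side, while the orthogonality-plus-decay is what prevents the cross terms in the double sum from accumulating despite the global support of the $\phi_{T,j}$.

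First, I color the coarse mesh $\T_H$ into $N_c=\mathcal{O}(L^d)$ classes $\mathcal{C}_1,\dots,\mathcal{C}_{N_c}$ such that within each class the $L$-patches $\omega^L_T$ are pairwise disjoint, and reduce to a per-color estimate by a triangle inequality,
\begin{equation}\notag
\Enormh{\sum_{T\in\T_H,\,j=1,\ldots,r}v_j(\phi_{T,j}-\phi^L_{T,j})}^2
\lesssim L^d\sum_{k=1}^{N_c}\Enormh{\sum_{T\in\mathcal{C}_k,\,j=1,\ldots,r}v_j(\phi_{T,j}-\phi^L_{T,j})}^2,
\end{equation}
so the task reduces to showing, for every class $\mathcal{C}_k$, an estimate of the form $\Enormh{\sum_{T\in\mathcal{C}_k,j}v_j(\phi_{T,j}-\phi^L_{T,j})}^2\lesssim\ba^3\sum_{T\in\mathcal{C}_k,j}v_j^2\Enormh{\phi_{T,j}-\phi^L_{T,j}}^2$.

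Second, within a single color class I use coercivity of $a_h$ and expand the squared energy norm as a double sum. The diagonal terms contribute the desired bound directly. For each cross term with distinct $T,T'\in\mathcal{C}_k$ (so $\omega^L_T\cap\omega^L_{T'}=\emptyset$), I subtract an $H^1$-conforming localized replacement of $\phi_{T',j'}-\phi^L_{T',j'}$ inside $\omega^L_T$, constructed via \lemref{lem:property_Lp}, so as to exploit the Galerkin orthogonality $a_h(\phi_{T,j}-\phi^L_{T,j},w)=0$ for $w\in\Vfhw{\omega^L_T}$. The residual is localized outside $\omega^L_T$ and is controlled by \lemref{lem:decay_layers}, allowing its absorption into the diagonal. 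Each step costs an $\ba$ factor (one from \lemref{lem:property_Lp}, two more from the decay/stability manipulations in the spirit of the proof of \lemref{lem:decay_layers}), which accumulates to the $C_4=C\ba^3$ appearing in the statement.

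The main obstacle is this second step: although the localized pieces $\phi^L_{T,j}$ truly have disjoint supports within one color, the global pieces $\phi_{T,j}$ have nontrivial tails everywhere, so the cross terms do not simply vanish from support considerations. The disjointness of the $L$-patches becomes effective only after an orthogonality-plus-cut-off argument that closely parallels the manipulations carried out in the proof of \lemref{lem:decay_layers}, and the careful bookkeeping of the $\ba$-factors through these manipulations is what ultimately produces the constant $C_4=C\ba^3$ in the final bound.
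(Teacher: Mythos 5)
Your coloring step is sound and does account for the $L^d$ factor, but the within-color cross-term argument is where the proof fails to close. Fix a color class $\mathcal{C}_k$ and write $e_{T,j}:=\phi_{T,j}-\phi^L_{T,j}$. After expanding $\Enormh{\sum_{T\in\mathcal{C}_k,\,j}v_je_{T,j}}^2$ you must control the off-diagonal sum $\sum_{(T,j)\neq(T',j')}v_jv_{j'}\Bh{e_{T,j}}{e_{T',j'}}$ by the diagonal $\sum_{T,j}|v_j|^2\Enormh{e_{T,j}}^2$. The orthogonality-plus-cutoff manipulation you describe only yields a bound of the type $|\Bh{e_{T,j}}{e_{T',j'}}|\lesssim\ba^2\,\Enormhw{e_{T,j}}{\Omega\setminus\omega^{L-2}_T}\,\Enormh{e_{T',j'}}$, and the first factor is of the same order as $\Enormh{e_{T,j}}$ itself (both are controlled by $\gamma^L\Enormh{\phi_{T,j}-\lambda_{T,j}}$ via \lemref{lem:decay_layers}, and there is no lower bound letting you trade back). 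Crucially, no dependence on the distance between $T$ and $T'$ ever appears, while a single color class still contains roughly $\#\T_H/L^d$ elements; bounding each cross term by the product of the two norms and summing therefore costs an extra factor $\#\mathcal{C}_k$, which is unbounded as $H\to0$. To rescue your route you would need a genuinely new estimate of the form $|\Bh{e_{T,j}}{e_{T',j'}}|\lesssim\gamma^{\operatorname{dist}(T,T')/H}\Enormh{e_{T,j}}\Enormh{e_{T',j'}}$, i.e., spatial decay of the corrector \emph{error} $e_{T,j}$ away from $T$ beyond radius $L$. \lemref{lem:decay_layers} gives only the global smallness of $e_{T,j}$, not its spatial localization, and your sketch does not establish such a bound.

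For contrast, the paper never expands a double sum. Setting $w:=\sum_{T,j}v_je_{T,j}$, it starts from $\Enormh{w}^2\lesssim\sum_{T,j}v_j\Bh{e_{T,j}}{w}$ and localizes the \emph{test} function rather than the trial functions: with $\zeta_T:=\zeta_T^{L+1,L+2}$ and the conforming correction $b$ from \lemref{lem:property_Lp}, the function $w-\zeta_Tw+b$ is a fine-scale function whose support misses $\omega^L_T$, so both the global and the local corrector equations give $\Bh{e_{T,j}}{w-\zeta_Tw+b}=0$; hence each term equals $v_j\Bh{e_{T,j}}{\zeta_Tw-b}$, which is bounded (up to powers of $\ba$) by $|v_j|\,\Enormh{e_{T,j}}\,\Enormhw{w}{\omega^{L+2}_T}$. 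A discrete Cauchy--Schwarz inequality together with the finite-overlap bound $\sum_{T\in\T_H}\Enormhw{w}{\omega^{L+2}_T}^2\lesssim L^d\Enormh{w}^2$ then yields the claim after dividing by $\Enormh{w}$. In that argument only single sums over $T$ occur, and the overlap count $L^d$ does all the work your coloring was intended to do, while the cross-interaction problem that defeats your version never arises.
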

\begin{proof}
 Let $w = \sum_{T\in\T_H,\;j=1,\ldots,r}v_j(\phi_{T,j}-\phi^L_{T,j})$, and note that
  \begin{equation}\label{eq:discrete_finescale_decay1}
    \begin{aligned}
      \Bh{\phi_{T,j}-\lambda_{T,j}}{w-\zeta_Tw+b} &= 0, \\
      \Bh{\phi^L_{T,j}-\lambda_{T,j}}{w-\zeta_Tw+b} &= 0,
    \end{aligned}
  \end{equation}
  where $\zeta_{T}:=\zeta_T^{L+1,L+2}$, using \lemref{lem:property_Lp} and the property of the cut-off function.
  From \eqref{eq:discrete_finescale_decay1} follows that $\Bh{w}{(1-\zeta_T)w+b_{T}}=0$ for all $w\in\Vfh$.
  We obtain
  \begin{equation}\label{eq:decayfunc1}
    \begin{aligned}
      &\Enormh{\hspace{-2ex}\sum_{T\in\T_H,\;j=1,\ldots,r}\hspace{-2ex}v_j(\phi_{T,j}-\phi^L_{T,j})} \lesssim \hspace{-2ex}\sum_{T\in\T_H,\;j=1,\ldots,r}\hspace{-2ex}v_j\Bh{\phi_{T,j}-\phi^L_{T,j}}{w} \\
      &\quad = \hspace{-2ex}\sum_{T\in\T_H,\;j=1,\ldots,r}\hspace{-2ex}v_j\Bh{\phi_{T,j}-\phi^L_{T,j}}{\zeta_{T} w-b} \\
      &\quad \lesssim \hspace{-2ex}\sum_{T\in\T_H,\;j=1,\ldots,r}\hspace{-2ex}|v_j|\cdot\Enormh{\phi_{T,j}-\phi^L_{T,j}}\left(\Enormh{\zeta_{T} w}+\Enormh{b}\right)\\
      &\quad \lesssim \hspace{-2ex}\sum_{T\in\T_H,\;j=1,\ldots,r}\hspace{-2ex}|v_j|\cdot\Enormh{\phi_{T,j}-\phi^L_{T,j}}\left(\Enormh{\zeta_{T} w}+\ba\EnormH{\Lp \zeta_{T}w}\right)\\
      &\quad \lesssim \hspace{-2ex}\sum_{T\in\T_H,\;j=1,\ldots,r}\hspace{-2ex}|v_j|\cdot\Enormh{\phi_{T,j}-\phi^L_{T,j}}\ba^2\Enormh{\zeta_{T}w}\\
    \end{aligned}
  \end{equation}
  From \eqref{eq:decay3} and \eqref{eq:decay4}, we have
  \begin{equation}\label{eq:decayfunc2}
    \begin{aligned}
      \Enormh{\zeta_T w}=\Enormhw{\zeta_T w}{\omega^{L+2}_T}\lesssim \ba\Enormhw{ w}{\omega^{L+2}_T}.
    \end{aligned}
  \end{equation}
  Then, further estimation of \eqref{eq:decayfunc1} can be achieved using \eqref{eq:decayfunc2} and the discrete Cauchy-Schwarz inequality:
  \begin{equation}\notag
    \begin{aligned}
      &\Enormh{\hspace{-2ex}\sum_{T\in\T_H,\;j=1,\ldots,r}\hspace{-2ex}v_j(\phi_{T,j}-\phi^L_{T,j})}  \\
      & \quad\leq  \ba^3\left(\hspace{-0ex}\sum_{T\in\T_H,\;j=1,\ldots,r}\hspace{-4ex}|v_j|^2\Enormh{\phi_{T,j}-\phi^L_{T,j}}^2\right)^{\hspace{-1ex}1/2}\hspace{-1.7ex}\left(\hspace{-0ex}\sum_{T\in\T_H,\;j=1,\ldots,r}\hspace{-4ex}\Enormhw{w}{\omega^{L+2}_T}^2\right)^{\hspace{-1ex}1/2} \\
      & \quad\leq  \ba^3L^{d/2} \cdot\left(\hspace{-0ex}\sum_{T\in\T_H,\;j=1,\ldots,r}\hspace{-4ex}|v_j|^2\Enormh{\phi_{T,j}-\phi^L_{T,j}}^2\right)^{1/2}\cdot\Enormh{ w}.
    \end{aligned}
  \end{equation}
Dividing by $w$ on both sides concludes the proof.
\end{proof}

The following theorem gives an error bound for the dG multiscale method.
\begin{theorem}\label{thm:discrete}
Let $u\in H^1_D(\Omega)$ solve \eqref{e:modelvar} and $\umshHL\in\VmshHL$ solve \eqref{eq:DGMM}. Then, the estimate
  \begin{equation}\notag
    \begin{aligned}
    \Enormh{u-\umshHL} \leq &\Enormh{u-u_h} + C_1\amin^{-1/2}|| H(f-\Lp f ) ||_{L^2(\Omega)} \\ &+C_5\|H^{-1}\|_{L^\infty(\Omega)}L^{d/2}\gamma^L\|f\|_{L^2(\Omega)},
    \end{aligned}
  \end{equation}
  is satisfied, with $0<\gamma<1$, $L$ from \lemref{lem:decay_layers}, $C_1$ from \thmref{thm:umshH}, $C_5=C\ba^2C_4^{1/2}C_3$ where $C_3$ is from \lemref{lem:decay_layers} and $C_4$ is from \lemref{lem:discrete_finescale_decay}. $C$ a positive constant independent of the mesh ($h$ or $H$), of the patch size $L$, and of the diffusion matrix $\mscale$. 
\end{theorem}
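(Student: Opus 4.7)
The strategy is the standard triangle-plus-Galerkin-orthogonality decomposition. I would first write
$$\Enormh{u-\umshHL}\leq\Enormh{u-u_h}+\Enormh{u_h-\umshHL},$$
so that the first term in the claimed bound appears and attention reduces to the discrete error $\Enormh{u_h-\umshHL}$. Since $a_h$ is coercive and continuous on $\Vh$ and $\VmshHL\subset\Vh$, C\'ea's lemma yields $\Enormh{u_h-\umshHL}\lesssim \inf_{v\in\VmshHL}\Enormh{u_h-v}$. Rather than taking the infimum abstractly, I would pick a concrete competitor: expand $\umshH=\sum_{T,j}c_{T,j}(\lambda_{T,j}-\phi_{T,j})$ in the natural basis of $\VmshH$ and set $v:=\sum_{T,j}c_{T,j}(\lambda_{T,j}-\phi^L_{T,j})\in\VmshHL$, which is the obvious ``localized copy'' of the idealized multiscale solution. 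A second triangle inequality then gives
$$\Enormh{u_h-v}\leq\Enormh{u_h-\umshH}+\Enormh{\umshH-v}.$$

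The first summand is handled immediately by \thmref{thm:umshH} and produces the term $C_1\amin^{-1/2}\|H(f-\Lp f)\|_{L^2(\Omega)}$ in the statement. The second summand equals $\Enormh{\sum_{T,j}c_{T,j}(\phi_{T,j}-\phi^L_{T,j})}$; to bound it I would apply \lemref{lem:discrete_finescale_decay} to move the norm inside the sum, paying the factor $C_4 L^d$; then invoke \lemref{lem:decay_layers} to extract the decay $C_3\gamma^L$; and finally apply \lemref{l:stabbasis} to trade $\Enormh{\phi_{T,j}-\lambda_{T,j}}$ for $\ba\EnormH{\lambda_{T,j}}$. This chain reduces the task to
$$\Enormh{\umshH-v}^2\lesssim C_4 L^d\,C_3^2\gamma^{2L}\,\ba^2\sum_{T,j}|c_{T,j}|^2\EnormH{\lambda_{T,j}}^2.$$

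The main obstacle is the last step: converting the coefficient sum into $\|f\|_{L^2(\Omega)}$. For that I would use the Riesz stability of the elementwise Lagrange basis together with the scalings $\|\lambda_{T,j}\|_{L^2(T)}^2\approx H_T^d$ and $\EnormH{\lambda_{T,j}}^2\approx \amax H_T^{d-2}$, and exploit that $\Lp\umshH=\sum_{T,j}c_{T,j}\lambda_{T,j}$ because each $\phi_{T,j}\in\Vfh$ lies in the kernel of $\Lp$. This yields
$$\sum_{T,j}|c_{T,j}|^2\EnormH{\lambda_{T,j}}^2\lesssim \amax\|H^{-1}\Lp\umshH\|_{L^2(\Omega)}^2\lesssim \amax\|H^{-1}\|_{L^\infty(\Omega)}^2\|\umshH\|_{L^2(\Omega)}^2.$$
Combining with the standard a priori bound $\|\umshH\|_{L^2(\Omega)}\lesssim\|u_h\|_{L^2(\Omega)}\lesssim \amin^{-1}\|f\|_{L^2(\Omega)}$, obtained from a discrete Poincar\'e inequality applied to the energy identity $\Bh{u_h}{u_h}=(f,u_h)_{L^2(\Omega)}$, closes the argument and, after collecting constants, produces the advertised term $C_5\|H^{-1}\|_{L^\infty(\Omega)}L^{d/2}\gamma^L\|f\|_{L^2(\Omega)}$ with $C_5\lesssim \ba^2 C_4^{1/2}C_3$. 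The main care required is in the bookkeeping of the $\amin,\amax,\ba$ powers in this concluding chain; the conceptually hard decay and localization estimates are already packaged in \lemref{lem:decay_layers} and \lemref{lem:discrete_finescale_decay}.
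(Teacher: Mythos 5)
Your proposal is correct and follows essentially the same route as the paper: a triangle inequality combined with the best-approximation property of the Galerkin method, the localized copy of $\umshH$ (with the same nodal coefficients) as the competitor in $\VmshHL$, \thmref{thm:umshH} for the idealized error, and the chain \lemref{lem:discrete_finescale_decay}, \lemref{lem:decay_layers}, \lemref{l:stabbasis} followed by the identity $\Lp\umshH=\sum_{T,j}c_{T,j}\lambda_{T,j}$ and a discrete Poincar\'e--Friedrichs/stability argument to reach $\|f\|_{L^2(\Omega)}$. The only differences from the paper's proof are cosmetic bookkeeping in the final stability step.
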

\begin{rem}
  To counteract the factor $\|H^{-1}\|_{L^\infty(\Omega)}$ in the error bound in \thmref{thm:discrete}, we can choose the localization parameter as $L = \lceil C\log(||H^{-1}||_{L^\infty(\Omega)})\rceil$. On adaptively refined meshes it is recommended to choose $L=\lceil C\log(H^{-1})\rceil$.
\end{rem}
\begin{proof} We define $\tilde{u}^{ms,L}_{H}:=\sum_{T\in\T_H,\;j=1,\ldots,r}\umshHT(x_j)\phi^L_{T,j}$. Then, we obtain
  \begin{equation}\label{eq:discrete1}
    \begin{aligned}
      &\Enormh{u-\umshHL} \leq \Enormh{u-\tilde{u}^{ms,L}_{H}} \\
      & \leq \Enormh{u-u_h}+\Enormh{u_h-\umshH}+\Enormh{\umshH-\tilde{u}^{ms,L}_{H}} \\
       & \leq \Enormh{u-u_h}+\Enormh{u_h-\umshH}+\Enormh{\hspace{-2ex}\sum_{T\in\T_H,\;j=1,\ldots,r}\hspace{-2ex}\umshHT(x_j)(\phi_{T,j}-\phi^L_{T,j})}.
    \end{aligned}
  \end{equation}   
  Now, estimating the terms in \eqref{eq:discrete1}, we have 
  \begin{equation}\notag
    \Enormh{u_h-\umshH} \leq C_1\amin^{-1/2}\|H(f-\Lp f)\|_{L^2(\Omega)},
  \end{equation}
  using \thmref{thm:umshH}, and
  \begin{equation}
    \begin{aligned}\label{eq:discrete2}
      &\Enormh{\hspace{-2ex}\sum_{T\in\T_H,\;j=1,\ldots,r}\hspace{-2ex} \umshHT(x_j)(\phi_{T,j} - \phi^L_{T,j})}^2 \\ 
      &\leq  C_4L^d\hspace{-2ex}\sum_{T\in\T_H,\;j=1,\ldots,r}\hspace{-2ex}|\umshHT(x_j)|^2\Enormh{\phi_{T,j}-\phi^L_{T,j}}^2. \\
      & \leq C_4C_3^2L^d\gamma^{2L}\hspace{-2ex}\sum_{T\in\T_H,\;j=1,\ldots,r}\hspace{-2ex}|\umshHT(x_j)|^2\Enormh{\phi_{T,j}-\lambda_{j}}^2,
    \end{aligned}
  \end{equation}
  using \lemref{lem:discrete_finescale_decay} and \lemref{lem:decay_layers}, respectively.
  Further estimation, using \lemref{l:stabbasis}, yields
  \begin{equation}
    \begin{aligned}\label{eq:discrete3}
      &\hspace{-2ex}\sum_{T\in\T_H,\;j=1,\ldots,r}\hspace{-2ex}|\umshHT(x_j)|^2\Enormh{\phi_{T,j}-\lambda_{T,j}}^2\\
      &\quad\lesssim\ba^2\hspace{-2ex}\sum_{T\in\T_H,\;j=1,\ldots,r}\hspace{-2ex}|\umshHT(x_j)|^2\EnormH{\lambda_{T,j}}^2 \\
      &\quad\lesssim\ba^2\amax\hspace{-2ex}\sum_{T\in\T_H,\;j=1,\ldots,r}\hspace{-2ex}|\umshHT(x_j)|^2H_T^{-2}\|\lambda_{T,j}\|_{L^2(T)}^2 \\
      &\quad= \ba^2\amax\hspace{-2ex}\sum_{T\in\T_H,\;j=1,\ldots,r}\hspace{-2ex}\|H^{-1}_T\umshHT(x_j)\lambda_{T,j}\|_{L^2(T)}^2 \\
      &\quad\lesssim \ba^2\amax\|\hspace{-2ex}\sum_{T\in\T_H,\;j=1,\ldots,r}\hspace{-2ex}H^{-1}_T\umshHT(x_j)\lambda_{T,j}\|_{L^2(\Omega)}^2. \\
    \end{aligned}
  \end{equation}
  Furthermore, using a Poincare-Friedrich inequality for piecewise $H^1$ functions, we deduce
  \begin{equation}\label{eq:discrete4}
    \begin{aligned}
      &\|\hspace{-2ex}\sum_{T\in\T_H,\;j=1,\ldots,r}\hspace{-2ex}H_T^{-1}\umshHT(x_j)\lambda_{T,j}\|_{L^2(\Omega)}^2 \\
      &\lesssim\|\hspace{-2ex}\sum_{T\in\T_H,\;j=1,\ldots,r}\hspace{-2ex}H_T^{-1}\umshHT(x_j)\Lp(\lambda_{T,j}-\phi_{T,j})\|_{L^2(\Omega)}^2 \\
      & \lesssim \amin^{-1}\Enormh{H^{-1}\umshH}^2 \\
      &\lesssim \alpha^{-1}\|H^{-1}\|^2_{L^\infty(\Omega)}\|f\|_{L^2(\Omega)}^2.
    \end{aligned}
  \end{equation}
  Combining \eqref{eq:discrete2}, \eqref{eq:discrete3} and \eqref{eq:discrete4}, we arrive to
  \begin{equation}\notag
    \Enormh{\umshH-\umshHL} \lesssim \ba^2C_4^{1/2}C_3\|H^{-1}\|_{L^\infty(\Omega)}L^{d/2}\gamma^L\|f\|_{L^2(\Omega)}.
  \end{equation}
\end{proof}
\subsection{Error in a quantity of interest}\label{ss:quantity}
In engineering applications, we are often interested in a quantity of interest, usually a functional $g\in L^2(\Omega)$ of the solution. To this end, consider the dual reference solution \eqref{e:DGweak}: find $\phi_h\in\Vh$ such that
\begin{equation}\label{eq:dual_DGweak}
  a_h(v,\phi_h) = g(v)\quad\text{for all }v\in\V_h,
\end{equation}
and the dual multiscale solution \eqref{eq:DGMM}: find $\phi_{H,h}^L\in\V_{H,h}^L$ such that
\begin{equation}\label{eq:dual_DGMM}
  a_h(v,\phi_{H,h}^{ms,L}) = g(v)\quad\text{for all }v\in\V_{H,h}^L.
\end{equation}
\begin{theorem}\label{thm:qoi}
  Let $u\in H^1_D(\Omega)$ solve \eqref{e:modelvar}, $u_H^{ms,L}\in \V_H^{ms,L}$ solve \eqref{eq:DGMM}, and let $g\in L^2(\Omega)$ be the quantity of interest. Then, the estimate
  \begin{equation}\notag
    |g(u)-g(u_{H,h}^L)|\lesssim |g(u)-g(u_h)| + \Enormh{u_h - u_H^{ms,L}}\Enormh{\phi_h - \phi_H^{ms,L}},
  \end{equation}
  is satisfied.
\end{theorem}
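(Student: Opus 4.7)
The plan is a standard dual-weighted residual (Aubin--Nitsche type) argument, decoupling the reference-discretization error from the multiscale error. First I would split
\begin{equation*}
|g(u)-g(u_{H,h}^{L})|\;\leq\;|g(u)-g(u_h)|\;+\;|g(u_h)-g(u_{H,h}^{L})|
\end{equation*}
by the triangle inequality (using linearity of $g$). The first term is exactly the reference discretization error in the quantity of interest, which already appears in the claimed bound and needs no further manipulation.

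For the second term I would exploit the dual problems \eqref{eq:dual_DGweak} and \eqref{eq:dual_DGMM}. Since $u_h-u_H^{ms,L}\in\Vh$, testing the dual reference problem with $v=u_h-u_H^{ms,L}$ gives
\begin{equation*}
g(u_h)-g(u_H^{ms,L})\;=\;g(u_h-u_H^{ms,L})\;=\;a_h(u_h-u_H^{ms,L},\phi_h).
\end{equation*}
The next key step is Galerkin orthogonality on the primal side: by \eqref{e:DGweak} and \eqref{eq:DGMM},
\begin{equation*}
a_h(u_h-u_H^{ms,L},w)\;=\;F(w)-F(w)\;=\;0\quad\text{for all }w\in\VmshHL,
\end{equation*}
so in particular we may subtract $a_h(u_h-u_H^{ms,L},\phi_H^{ms,L})$ from the identity above, yielding
\begin{equation*}
g(u_h)-g(u_H^{ms,L})\;=\;a_h\!\left(u_h-u_H^{ms,L},\;\phi_h-\phi_H^{ms,L}\right).
\end{equation*}

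The final step is continuity of the SIP bilinear form with respect to the energy norm $\Enormh{\cdot}$ on $\Vh$, which is a standard property of the form \eqref{e:bilindg} for sufficiently large penalty $\sigma$; this gives
\begin{equation*}
\bigl|a_h(u_h-u_H^{ms,L},\phi_h-\phi_H^{ms,L})\bigr|\;\lesssim\;\Enormh{u_h-u_H^{ms,L}}\,\Enormh{\phi_h-\phi_H^{ms,L}},
\end{equation*}
and combining with the triangle-inequality split concludes the proof. I do not anticipate any real obstacle here: the argument is entirely algebraic once one invokes the existing dG continuity estimate, and both the primal and dual multiscale spaces are subspaces of $\Vh$ so Galerkin orthogonality applies cleanly. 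The only subtlety worth noting is that, unlike the usual Aubin--Nitsche trick for $L^2$-estimates, no additional regularity of the dual solution is exploited here --- the bound is expressed directly in terms of the energy-norm error of the dual approximation, which will later be estimated by applying Theorem~\ref{thm:discrete} to the dual problem in order to deduce the $L^2$ corollary.
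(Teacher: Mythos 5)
Your argument is correct and coincides with the paper's own proof: the same triangle-inequality split, the same dual representation $g(u_h-u_H^{ms,L})=a_h(u_h-u_H^{ms,L},\phi_h)$, insertion of $\phi_H^{ms,L}$ via Galerkin orthogonality (the paper phrases this through the dual orthogonality $a_h(v,\phi_h-\phi_H^{ms,L})=0$ for $v\in\VmshHL$, while you invoke the primal one; by symmetry of $a_h$ these are interchangeable here), and finally boundedness of $a_h$ in the energy norm. No gaps.
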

\begin{proof}
From \eqref{eq:dual_DGweak} and \eqref{eq:dual_DGMM}, we obtain the Galerkin orthogonality
\begin{equation}\label{eq:dual_Galerkin_orthogonality}
  a_h(v,\phi_h-\phi_{H}^{ms,L}) = 0\quad\text{for all }v\in\V_H^{ms,L}.
\end{equation}
Using the triangle inequality, we have
\begin{equation}\notag
  |g(u)-g(u_H^{ms,L})|\leq|g(u)-g(u_h)|+|g(u_h)-g(u_H^{ms,L})|.
\end{equation}
Finally, observing that
\begin{equation}\notag
  \begin{aligned}
    |g(u_h - u_{h,H}^L)|&= |a_h(u_h - u_{h,H}^L,\phi_h)| \\
    & = |a_h(u_h - u_{h,H}^L,\phi_h - \phi_{H,h}^L)| \\
    & \lesssim \Enormh{u_h - u_{h,H}^L}\Enormh{\phi_h - \phi_{H,h}^L}, \\
  \end{aligned}
\end{equation}
using \eqref{eq:dual_Galerkin_orthogonality}, concludes the proof.
\end{proof}
\begin{corollary}\label{cor:L2bound}
 For $g(v)=(u_h-u_{H,h}^L,v)_{L^2(\Omega)}$, the following $L^2$-norm error estimates hold:
 \begin{equation}\notag
  \|u-u_{H,h}^L\|_{L^2(\Omega)} \lesssim \|u-u_h\|_{L^2(\Omega)}+\Enormh{u_h - u_H^{ms,L}}^{1/2}\Enormh{\phi_h - \phi_H^{ms,L}}^{1/2},
\end{equation}
and
 \begin{equation}\label{eq:L2est_2}
  \|u-u_{H,h}^L\|_{L^2(\Omega)} \lesssim \|u-u_h\|_{L^2(\Omega)}+H\Enormh{u_h - u_H^{ms,L}},
\end{equation}
for $L=\lceil C\log(H^{-1})\rceil$ with $C$ sufficiently large positive constant independent of the mesh parameters.
\end{corollary}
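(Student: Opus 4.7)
The plan is to mount a standard Aubin--Nitsche-type duality argument on top of the quantity-of-interest estimate provided by Theorem \ref{thm:qoi}. The natural choice of the functional is $g(v) := (u_h - u_{H,h}^L,\,v)_{L^2(\Omega)}$, so that the $L^2$-error of interest is encoded on the left-hand side of Theorem \ref{thm:qoi}.

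For the first inequality I would start from the algebraic identity
\[
\bigl(g(u)-g(u_{H,h}^L)\bigr)-\bigl(g(u)-g(u_h)\bigr) \;=\; \|u_h-u_{H,h}^L\|_{L^2(\Omega)}^2.
\]
Combining this with Theorem \ref{thm:qoi} and the Cauchy--Schwarz bound $|g(u)-g(u_h)| \leq \|u_h-u_{H,h}^L\|_{L^2(\Omega)}\,\|u-u_h\|_{L^2(\Omega)}$ yields a quadratic inequality in $\|u_h-u_{H,h}^L\|_{L^2(\Omega)}$ whose linear term can be handled by Young's inequality. The resulting bound on $\|u_h-u_{H,h}^L\|_{L^2(\Omega)}$, together with one triangle inequality $\|u-u_{H,h}^L\|_{L^2} \leq \|u-u_h\|_{L^2}+\|u_h-u_{H,h}^L\|_{L^2}$, gives the first inequality.

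For the improved estimate \eqref{eq:L2est_2} I would bound the dual-error factor $\Enormh{\phi_h-\phi_H^{ms,L}}$ separately. Because the dual data $g = u_h - u_{H,h}^L$ lies in $L^2(\Omega)$, the very argument that underlies Theorem \ref{thm:discrete} (namely Theorem \ref{thm:umshH} combined with Lemmas \ref{lem:decay_layers} and \ref{lem:discrete_finescale_decay}) applied to the dual problem yields
\[
\Enormh{\phi_h-\phi_H^{ms,L}}\;\lesssim\; \|H(g-\Lp g)\|_{L^2(\Omega)} \;+\; \|H^{-1}\|_{L^\infty(\Omega)}\,L^{d/2}\,\gamma^L\,\|g\|_{L^2(\Omega)}.
\]
The first term is $\lesssim \|H\|_{L^\infty}\|g\|_{L^2}$ by $L^2$-stability of $\Lp$, and with $L=\lceil C\log(H^{-1})\rceil$ and $C$ sufficiently large the second term is likewise $\mathcal{O}(H\|g\|_{L^2})$; altogether $\Enormh{\phi_h-\phi_H^{ms,L}}\lesssim H\,\|u_h-u_{H,h}^L\|_{L^2(\Omega)}$. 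Substituting this bound into the first inequality and absorbing the remaining half-power of $\|u_h-u_{H,h}^L\|_{L^2(\Omega)}$ by a further Young's inequality produces \eqref{eq:L2est_2}.

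The only real calibration step is the choice of $C$: since $\gamma^L = H^{C|\log\gamma|}$, one needs $C|\log\gamma|$ strictly greater than $2$ (modulo the mild $L^{d/2}$ factor) in order that the exponential-decay term in the dual estimate contribute an extra factor of $H$ on top of cancelling the $\|H^{-1}\|_{L^\infty}$ prefactor. This is precisely the \emph{sufficiently large} hypothesis on $C$ in the corollary. Beyond this calibration, the remaining steps are routine applications of Cauchy--Schwarz, Young's and triangle inequalities, and I do not anticipate a genuine obstacle.
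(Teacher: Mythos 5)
Your argument is correct and is essentially the intended one: the paper leaves the corollary unproved precisely because it follows from Theorem~\ref{thm:qoi} with $g(v)=(u_h-u_{H,h}^L,v)_{L^2(\Omega)}$, the identity $g(u_h)-g(u_{H,h}^L)=\|u_h-u_{H,h}^L\|_{L^2(\Omega)}^2$, a triangle inequality, and the dual analogue of Theorem~\ref{thm:discrete} with the calibration $C|\log\gamma|>2$ for \eqref{eq:L2est_2}. The only cosmetic simplification available is that the quadratic-inequality/Young step is avoidable, since the Galerkin orthogonality in the proof of Theorem~\ref{thm:qoi} gives $\|u_h-u_{H,h}^L\|_{L^2(\Omega)}^2=|a_h(u_h-u_{H,h}^L,\phi_h-\phi_{H,h}^L)|\lesssim\Enormh{u_h-u_H^{ms,L}}\,\Enormh{\phi_h-\phi_H^{ms,L}}$ directly, without routing through the term $|g(u)-g(u_h)|$.
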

\begin{rem}
As expected, if we are interested in a smoother functional, a higher convergence rate is obtained for $|g(u_h-u_H^{ms,L})|$. For example, given the forcing function for the primal problem $f\in H^m(\T_H)$, a quantity of interest $g\in H^n(\T_H)$ (with $H^0(\T_H)$ denoting the standard $L^2(\Omega)$ space), and choosing $L=\lceil C\log(H^{-1}) \rceil$ with large enough $C$, gives
\begin{equation}\notag
  \begin{aligned}
    |g(u - u_{H}^{ms,L})| \lesssim |g(u)-g(u_h)| +  H^{2+m+n}(\sum_{T\in\T_H}|f|_{H^m(T)})(\sum_{T\in\T_H}|g|_{H^n(T)}).
  \end{aligned}
\end{equation}
\end{rem}
\section{Numerical Experiments}\label{s:numexp}
Let $\Omega$ where be an $L$-shaped domain (constructed by removing the lower right quadrant in the unit square) and let the forcing function be $f=1+\cos(2\pi x)\cos(2\pi y)$ for $(x,y)\in\Omega$. The boundary $\Gamma$ is divided into the Neumann boundary $\Gamma_N:=\Gamma\cap(\{(x,y): y=0\}\cup \{(x,y):  x=1\})$ and the Dirichlet boundary $\Gamma_D=\Gamma\setminus\Gamma_N$. We shall consider three different permeabilities: constant $\mscale_1=1$, $\mscale_2=\mscale_2(x)$, which is piecewise constant with periodic values of $1$ and $0.01$ with respect to a Cartesian grid of width $2^{-6}$ in the $x$-direction, and $\mscale_3 = \mscale_3(x,y)$ which piecewise constant with respect to a Cartesian grid of width $2^{-6}$ both in the $x$- and $y$-directions and has a maximum ratio $\amax/\amin = 4\cdot 10^{6}$. The data for $\mscale_3$ are taken from layer $64$ in the $SPE$ benchmark problem, see \texttt{http://www.spe.org/web/csp/}. The permeabilities $A_2$ and $A_3$ are illustrated in \fgref{fg:LshapeP}.
\begin{figure}[th!]
  \centering
  \subfigure[$\beta/\alpha = 10^2$]{
    \includegraphics[width=0.45\textwidth]{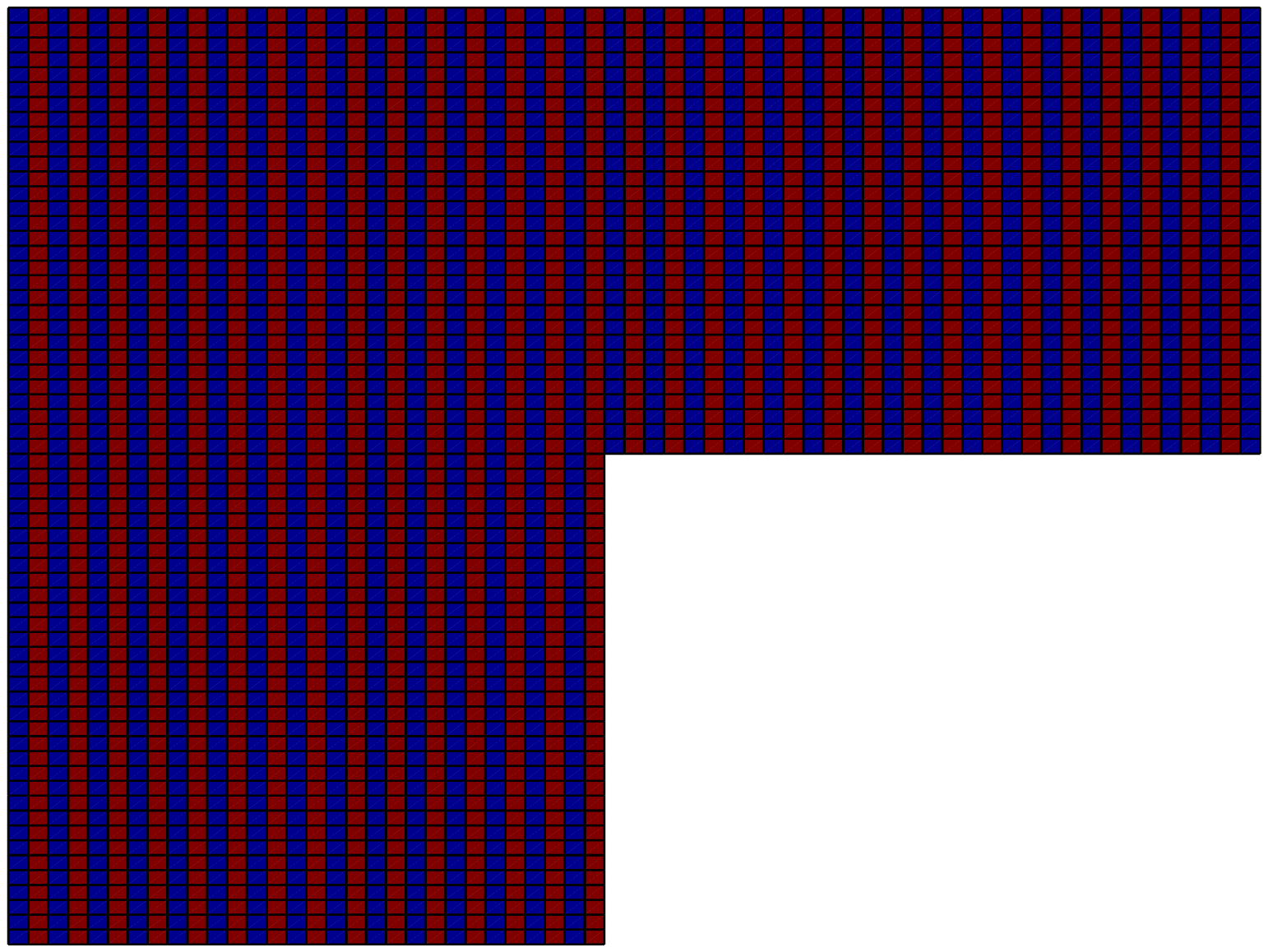}
  }
  \subfigure[$\beta/\alpha \approx 4\cdot 10^6$]{
    \includegraphics[width=0.45\textwidth]{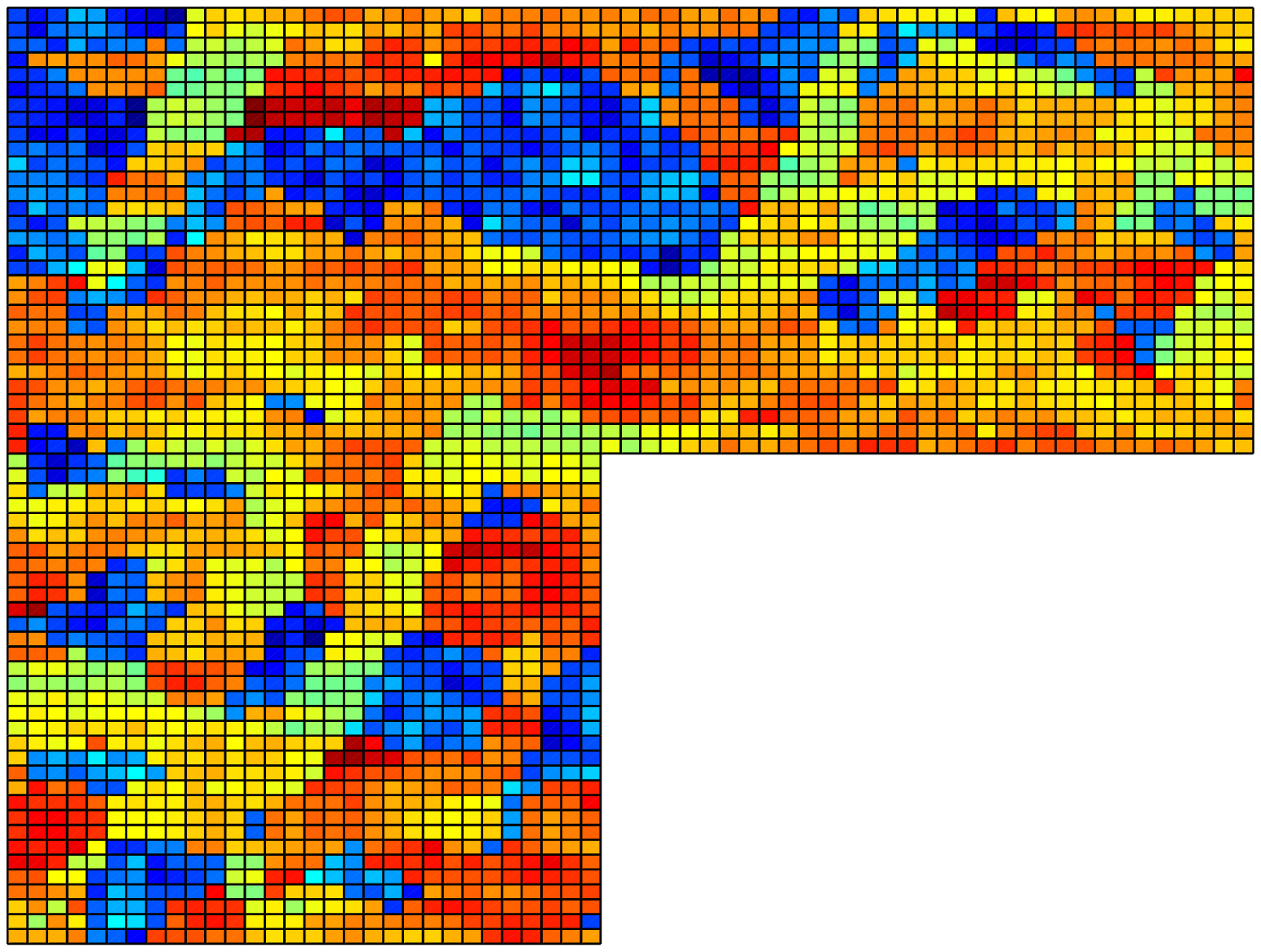}
  }
  \caption{The permeability structure of $A_2$ (a) and $A_3$ (b) in log scale.}
  \label{fg:LshapeP}
\end{figure}
For the periodic problem many of the corrected basis functions will be identical. For instance, all the local corrected basis in the interior are solved on identical patches, thereby reducing the computational effort considerably. In the extreme case of a problem with periodic coefficients on a unit hypercube, with period boundary conditions, the correctors $\phi_{T,j}$, $j=1,\dots,r$, will be identical for all $T\in\T_H$.

Consider the uniform (coarse) quadrilateral mesh $\T_H$ with size $H=2^{-i}$, $i=1,\dots,6$. The convergence rate $-p/2$ corresponds to $\mathcal{O}(H^p)$ since the number degrees of freedom $\approx H^{-2}$.
The corrector functions \eqref{eq:corrector} are solved on a subgrid of a (fine) quadrilateral mesh $\T_h$ with mesh size $2^{-8}$. The mesh $\T_h$ will also act as a reference grid for which we shall compute a reference solution $u_h\in\V_h$ \eqref{e:DGweak} on. Note that the mesh $\T_h$ is chosen so that it resolves the fine scale features for $\mscale_i$, $i=1,2,3$, hence we assume that the solution $u_h$ is sufficiently accurate. 

\subsection{Localization parameter}
If $f \in H^{m}(\T_H)$ we have the bound
\begin{equation}\label{eq:reg_rhs}
  ||H(f-\Lp f))||_{L^2(\Omega)} \lesssim \sum_{T\in\T_H} H^{k+1}|f|_{H^k(T)},
\end{equation}
where $k=0$ for $m=0$, $k=1$ for $m=1$, and $k=2$ for $m>1$. Hence, to balance the error in between the terms on the right-hand side of the estimate in \thmref{thm:discrete}, different constant $C$ has to be used for the localization parameter, $L=\lceil C\log(H^{-1})\rceil$, depending on the element-wise regularity of the forcing function $f$ on $\T_H$. 
\begin{figure}[th!]
  \centering
  \includegraphics[width=0.75\textwidth]{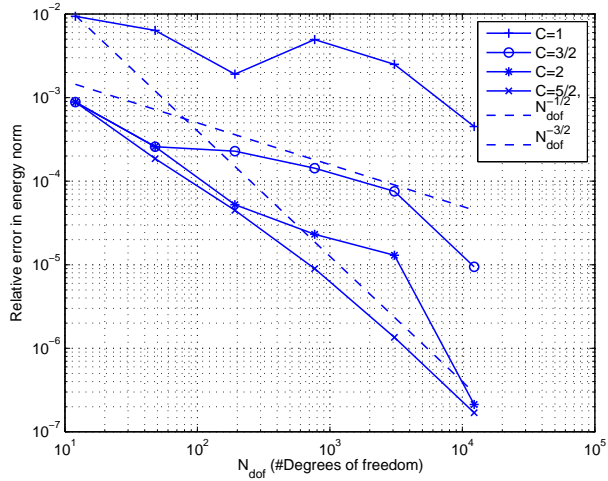}
  \caption{Diffusion coefficient $A_1 = 1$. Relative energy-norm error against $N_{\text{dof}}$, for different values of $C$ for the localisation parameter $L$.}
\label{fg:localization_parameter}
\end{figure}
\begin{figure}[th!]
  \centering
  \includegraphics[width=0.75\textwidth]{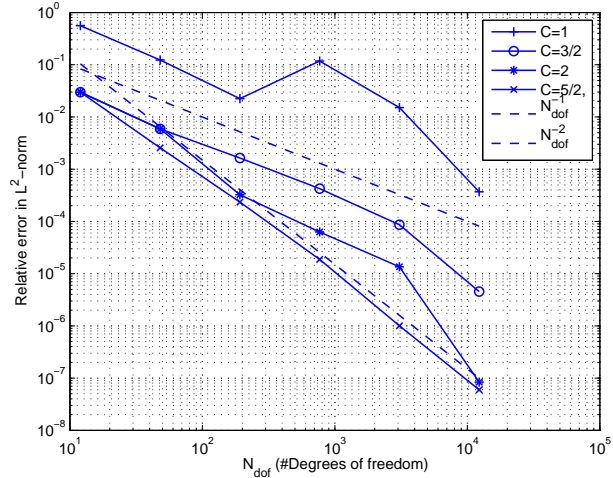}
  \caption{Diffusion coefficient $A_1 = 1$. Relative $L^2$-norm error against $N_{\text{dof}}$, for different values of $C$ for the localisation parameter $L$.}
\label{fg:localization_parameter_L2}
\end{figure}
\fgref{fg:localization_parameter} shows the relative error in the energy norm $\Enormh{u_h-u_{H}^{ms,L}}/\Enormh{u_h}$ and \fgref{fg:localization_parameter_L2} the relative error in the $L^2$-norm $\|u_h - u_{H}^{ms,L}\|_{L^2(\Omega)}/\|u_h\|_{L^2(\Omega)}$ between $u_h$ and $u_H^{ms,L}$ against the number of degrees of freedom $N_{\text{dof}}\approx O(H^{-2})$, using different constants $C=1,3/2,2,5/2$. With the chose $C=5/2$, the errors due to the localization can be neglected compared to the errors from the forcing function, both for the energy- and for the $L^2$-norm. For $f\not\in H^1(\T_h)$, $C=3/2$ is sufficient since \eqref{eq:reg_rhs} gives linear convergence. In the following numerical experiments we shall use $C=2$, since this value seems to balance the error sufficiently. Note that the numerical overhead increases with $C$ as the sizes of the patches $\omega^L_T$ $T\in\T_H$, increases with $L=\lceil C\log(H^{-1})\rceil$. This results in both increased computational effort to compute the corrector functions and reduced sparseness in the coarse scale stiffness matrix.

\subsection{Energy-norm convergence}
Let the localization parameter be \\$L=\lceil 2\log(H^{-1})\rceil$. \fgref{fg:conv} shows the relative error in the energy norm plotted against the number of degrees of freedom.
The different permeabilities $A_i$, $i=1,2,3$, and the singularity arising from the $L$-shaped domain do not appear to have a substantial impact on the convergence rate, which is about  $3/2$, as expected. We note in passing that using standard dG on the coarse mesh only admits poor convergence behaviour for $A_2$ and for $A_3$. This is to be expected, since standard dG on the coarse mesh does not resolve the fine scale features. 
\begin{figure}[th!]
  \centering
 \includegraphics[width=0.75\textwidth]{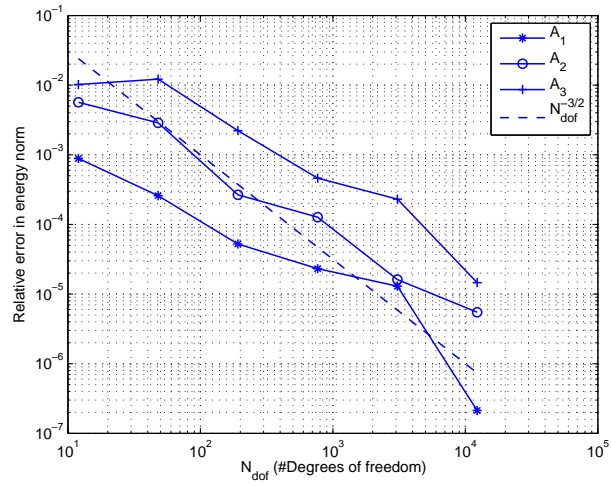}
 \caption{Relative energy-norm error against $N_{\text{dof}}$, for $C=2$ in the localisation parameter $L$ for the the diffusion coefficients $A_1$, $A_2$, and $A_3$.}
 \label{fg:conv}
\end{figure}

\subsection{$L^2$-norm convergence}
Again, set $L=\lceil 2\log(H^{-1})\rceil$. \fgref{fg:conv_L2} and \fgref{fg:conv_TL2}, shows the relative $L^2$-norm error again the number of degrees of freedom between, $u_h$ and $u_{H,h}^L$ and between $u_h$ and $\Lp u_{H,h}^L$, viz., $\|u_h-\Lp u_{H}^{ms,L}\|_{L^2(\Omega)}/\|u_h\|_{L^2(\Omega)}$, respectively. In \fgref{fg:conv_L2} we see that the $L^2$-norm error between $u_h$ and $u_H^{ms,L}$ converges at a faster rate than in the energy norm (convergence rate $-2$ compared to $-3/2$, respectively,) as expected from \eqref{eq:L2est_2}. In \fgref{fg:conv_TL2} only the coarse part of $u_H^{ms,L}$ is used (i.e. $\Lp u_H^{ms,L}$); nevertheless it appears to have a faster convergence rate than $-1/2$,  except for the case of the permeability $\mscale_3$.
\begin{figure}[th!]
  \centering
 \includegraphics[width=0.75\textwidth]{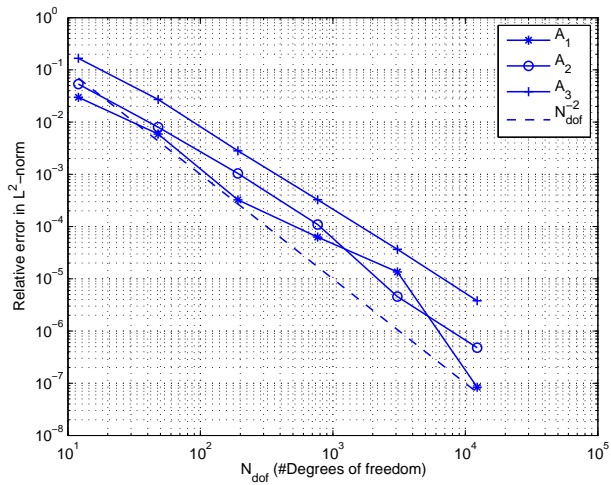}
 \caption{Relative $L^2$-norm error against $N_{\text{dof}}$, for $C=2$ in the localisation parameter $L$ for the the diffusion coefficients $A_1$, $A_2$, and $A_3$.}
 \label{fg:conv_L2}
\end{figure}
\begin{figure}[th!]
  \centering
 \includegraphics[width=0.75\textwidth]{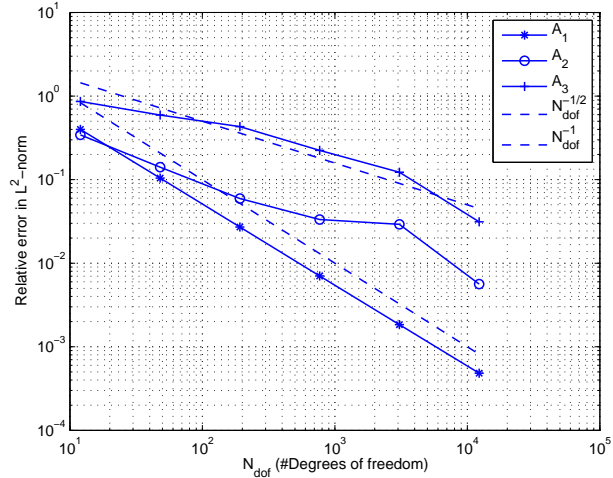}
 \caption{Relative $L^2$-norm error against $N_{\text{dof}}$, for $C=2$ in the localisation parameter $L$ for the the diffusion coefficients $A_1$, $A_2$, and $A_3$.}
\label{fg:conv_TL2}
\end{figure}

\bibliographystyle{amsplain}
\bibliography{references}

\providecommand{\bysame}{\leavevmode\hbox to3em{\hrulefill}\thinspace}
\providecommand{\MR}{\relax\ifhmode\unskip\space\fi MR }
\providecommand{\MRhref}[2]{%
  \href{http://www.ams.org/mathscinet-getitem?mr=#1}{#2}
}
\providecommand{\href}[2]{#2}
\begin{thebibliography}{10}

\bibitem{MR664882}
D.~N. Arnold, \emph{An interior penalty finite element method with
  discontinuous elements}, SIAM J. Numer. Anal. \textbf{19} (1982), no.~4,
  742--760.

\bibitem{MR1286212}
I.~Babu{\v{s}}ka, G.~Caloz, and J.~E. Osborn, \emph{Special finite element
  methods for a class of second order elliptic problems with rough
  coefficients}, SIAM J. Numer. Anal. \textbf{31} (1994), no.~4, 945--981.

\bibitem{BabLip11}
I.~Babu{\v{s}}ka and R.~Lipton, \emph{The penetration function and its
  application to microscale problems}, Multiscale Model. Simul. \textbf{9}
  (2011), no.~1, 373--406.

\bibitem{MR701094}
I.~Babu{\v{s}}ka and J.~E. Osborn, \emph{Generalized finite element methods:
  their performance and their relation to mixed methods}, SIAM J. Numer. Anal.
  \textbf{20} (1983), no.~3, 510--536.

\bibitem{MR2721592}
L.~Berlyand and H.~Owhadi, \emph{Flux norm approach to finite dimensional
  homogenization approximations with non-separated scales and high contrast},
  Arch. Ration. Mech. Anal. \textbf{198} (2010), no.~2, 677--721.

\bibitem{resbub}
F.~Brezzi, L.P. Franca, T.J.R. Hughes, and A.~Russo, \emph{$b = \int g$},
  Comput. Methods Appl. Mech. Engrg. \textbf{145} (1997), 329--339.

\bibitem{MR0440955}
J.~Douglas and T.~Dupont, \emph{Interior penalty procedures for elliptic and
  parabolic {G}alerkin methods}, Computing methods in applied sciences
  ({S}econd {I}nternat. {S}ympos., {V}ersailles, 1975), Springer, Berlin, 1976,
  pp.~207--216. Lecture Notes in Phys., Vol. 58. \MR{0440955 (55 \#13823)}

\bibitem{EGM12}
D.~Elfverson, E.~Georgoulis, and A.~M{\aa}lqvist, \emph{An adaptive
  discontinuous galerkin multiscale method for elliptic problems}, Submitted
  for publication (2012).

\bibitem{MR1455261}
T.~Y. Hou and X.-H. Wu, \emph{A multiscale finite element method for elliptic
  problems in composite materials and porous media}, J. Comput. Phys.
  \textbf{134} (1997), no.~1, 169--189.

\bibitem{MR2290408}
P.~Houston, D.~Sch{\"o}tzau, and T.~P. Wihler, \emph{Energy norm a posteriori
  error estimation of {$hp$}-adaptive discontinuous {G}alerkin methods for
  elliptic problems}, Math. Models Methods Appl. Sci. \textbf{17} (2007),
  no.~1, 33--62. \MR{2290408 (2008a:65216)}

\bibitem{MR1660141}
T.~J.~R. Hughes, G.~R. Feij{\'o}o, L.~Mazzei, and J.-B. Quincy, \emph{The
  variational multiscale method---a paradigm for computational mechanics},
  Comput. Methods Appl. Mech. Engrg. \textbf{166} (1998), no.~1-2, 3--24.

\bibitem{Karakashian2003}
O.~Karakashian and F.~Pascal, \emph{A posteriori error estimates for a
  discontinuous {G}alerkin approximation of second-order elliptic problems},
  SIAM J. Numer. Anal. \textbf{41} (2003), 2374--2399.

\bibitem{Larson20072313}
M.~G. Larson and A.~Målqvist, \emph{Adaptive variational multiscale methods
  based on a posteriori error estimation: Energy norm estimates for elliptic
  problems}, Computer Methods in Applied Mechanics and Engineering \textbf{196}
  (2007), no.~21-24, 2313--2324.

\bibitem{MP11}
A.~M{\aa}lqvist and D.~Peterseim, \emph{Localization of elliptic multiscale
  problems}, Submitted for publication in Math. Comp., in revision, available
  as preprint arXiv:1110.0692 (2011).

\bibitem{malqvist2010}
A.~Målqvist, \emph{Multiscale methods for elliptic problems}, Multiscale
  Modeling \& Simulation \textbf{9} (2011), no.~3, 1064--1086.

\bibitem{OwhadiZ11}
H.~Owhadi and L.~Zhang, \emph{Localized bases for finite-dimensional
  homogenization approximations with nonseparated scales and high contrast},
  Multiscale Modeling {\&} Simulation \textbf{9} (2011), no.~4, 1373--1398.

\end{thebibliography}

\appendix

\section{Equalities for averages and jump operators}
We derive equalities for averages and jump operators across interfaces where the functions $v$ and $w$ have discontinuities.
  Using $[vw] = \{v\}[w] + [v]\{w\}$ and $\{v\}\{w\} = \{vw\}-1/4[v][w]$, we have
  \begin{equation}
    \begin{aligned}\label{eq:A1}
       \{v w\}[vu] = & \{w\}\{v\}[vu]+1/4[v][w][vu]\\
       = &\{w\}[v^2u]-\{w\}[v]\{vu\}+1/4[v][w][vu] \\
       = &\{w\}[vu]-[v]\{w\}\{v\}\{u\}-1/4[v]\{w\}[v][u] \\
       &\quad +1/4[v]^2[w]\{u\}+1/4[v]\{v\}[w][u]
    \end{aligned}
  \end{equation}
  and
  \begin{equation}
    \begin{aligned}\label{eq:A2}
       \{vw\}[vu] =& \{v\}\{vw\}[u] + \{vw\}[v]\{u\} \\
       = &\{v^2w\}[u]-1/4[v][vw][u] + \{vw\}[v]\{u\} \\
       = &\{v^2w\}[u] -1/4[v]^2\{w\}[u]-1/4[v]\{v\}[w][u] \\
       &\quad+ [v]\{v\}\{w\}\{u\}+1/4 [v]^2[w]\{u\} \\
    \end{aligned}
  \end{equation}
  Combining \eqref{eq:A1} and \eqref{eq:A2} we obtain
  \begin{equation}
    \begin{aligned}\label{eq:A3}
      2\{vw\}[vu] = \{w\}[v^2u]+\{v^2w\}[u]+1/2 [v]^2[w]\{u\}-1/2[v]^2\{w\}[u]
    \end{aligned}
  \end{equation}
  Also,
  \begin{equation}
    \begin{aligned}\label{eq:A4}
      [vu][vu] =& [u]\{v\}[vu]+[v]\{u\}[vu] \\
      =& [u][v^2u]-[v][u]\{vu\}+[v]\{u\}[vu] \\
      =& [u][v^2u]) -[v][u]\{v\}\{u\}-1/4[v][u][v][u] \\
      & \quad +[v]\{u\}[v]\{u\}+[v]\{u\}\{v\}[u] \\
      =& [u][v^2u] -1/4[v]^2[u]^2 +[v]^2\{u\}^2
    \end{aligned}
  \end{equation}
\end{document}